\documentclass[]{article}

\usepackage{graphicx}
\usepackage{tikz}
\usepackage{amsmath}
\usepackage{amssymb}
\usepackage{amsthm}
\usepackage{mathrsfs}
\IfFileExists{stmaryrd.sty}{%
  \usepackage{stmaryrd}
}{%

}
\usepackage[T1]{fontenc}
\usepackage{pxfonts}
\usepackage{enumerate}
\usepackage{color}
\usepackage{enumitem}
\usepackage[hidelinks]{hyperref}

\allowdisplaybreaks
\makeatletter
\renewcommand\section{\@startsection{section}{1}{\z@}%
  {-3.5ex \@plus -1ex \@minus -.2ex}%
  {2.3ex \@plus .2ex}%
  {\normalfont\bfseries\scshape\centering\fontsize{11}{14}\selectfont}}
\renewcommand\subsection{\@startsection{subsection}{2}{\z@}%
  {-3.25ex \@plus -1ex \@minus -.2ex}%
  {1.5ex \@plus .2ex}%
  {\normalfont\bfseries\scshape\fontsize{11}{14}\selectfont}}
\makeatother

\usepackage{fancyhdr}
\usepackage[nottoc,notlot,notlof]{tocbibind}

\newcommand\shorttitle{Heat flow conjecture for random matrices}
\newcommand\authors{T. Assiotis}

\newtheorem{thm}{Theorem}[section]
\newtheorem{cor}[thm]{Corollary}
\newtheorem{lem}[thm]{Lemma}
\newtheorem{prop}[thm]{Proposition}

\newcommand{\E}{\mathbb{E}}
\newcommand{\Prob}{\mathbb{P}}
\newcommand{\R}{\mathbb{R}}
\newcommand{\C}{\mathbb{C}}
\newcommand{\D}{\mathbb{D}}
\newcommand{\tr}{\operatorname{tr}}
\newcommand{\supp}{\operatorname{supp}}

\newcommand{\dd}{\mathrm{d}}
\newcommand{\defeq}{\overset{\mathrm{def}}{=}}
\definecolor{PlanarBlue}{rgb}{0.12,0.35,0.60}
\definecolor{BoundaryTeal}{rgb}{0.08,0.43,0.39}

\title{\large\bf ON THE HEAT FLOW CONJECTURE FOR RANDOM MATRICES}
\author{\small THEODOROS ASSIOTIS}
\date{}

\begin{document}

\maketitle

\begin{abstract}
We prove two families of distinguished cases of the heat flow conjecture for random
matrices of Hall--Ho: elliptic Gaussian sources with planar targets, and
deterministic Hermitian sources with targets on the boundary of the
covariance disk.  In particular, the
empirical zero measure of the heat-evolved characteristic polynomial of a
complex Ginibre matrix converges almost surely to the semicircle law. The strategy of the proof is to transfer estimates for the corresponding well-understood Gaussian
matrix ensembles, through exact heat identities, into control of the zeros
of the heat-evolved characteristic polynomials.
\end{abstract}

\section{Introduction}\label{SectionIntroduction}

\subsection{Setting and main results}\label{SectionMainResults}

The remarkable heat flow conjecture of Hall and Ho
\cite[Conjecture~2.3]{HallHo} relates polynomial heat flow to a change
in the covariance of a Gaussian matrix perturbation. We prove two
families of distinguished cases of the conjecture: elliptic Gaussian sources with planar targets, and
bounded deterministic Hermitian sources with targets on the boundary
of the covariance disk. The latter include the convergence from the
characteristic polynomial of a complex Ginibre matrix to the semicircle law. Let us make our results precise. A more detailed literature review will then follow.

For \(s>0\) and \(c\in\C\) with \(|c|\leq s\), let
\(\mathbf Z_N(s,c)\) be the centred jointly real-Gaussian matrix with
entry covariances
\begin{equation}\label{EqHallCovariance}
 \E\left[\left(\mathbf Z_N\left(s,c\right)\right)_{ij}
       \overline{\left(\mathbf Z_N\left(s,c\right)\right)_{kl}}\right]
 =\frac{s}{N}\delta_{ik}\delta_{jl},
 \qquad
 \E\left[\left(\mathbf Z_N\left(s,c\right)\right)_{ij}\left(\mathbf Z_N\left(s,c\right)\right)_{kl}\right]
 =\frac{c}{N}\delta_{il}\delta_{jk}.
\end{equation}
Here all real and imaginary parts together form a real Gaussian vector.
The covariance condition is precisely \(|c|\leq s\). At \(c=0\) we
obtain complex Ginibre matrices with entry variance \(s/N\), while
\(c=s\) gives the Gaussian unitary ensemble (GUE) of variance \(s\).
Our convention is related to that of \cite{HallHo} by \(c=s-\tau\).

For a polynomial \(P\) and \(t\in\C\), define the terminating heat series
\begin{equation}\label{EqPolynomialHeatFlowDefinition}
 \exp\!\left(\frac{t}{2N}\partial_z^2\right)P
 \defeq\sum_{k=0}^{\left\lfloor\deg\left(P\right)/2\right\rfloor}
   \frac1{k!}\left(\frac{t}{2N}\right)^k\partial_z^{2k}P.
\end{equation}
Changing the covariance parameter from \(c_0\) to \(c_1\) corresponds to
\begin{equation}\label{EqHeatSign}
 q_N\defeq\exp\!\left(-\frac{c_1-c_0}{2N}\partial_z^2\right)p_N.
\end{equation}
Informally, the Hall--Ho conjecture predicts that the zeros obtained by
applying this operator to a perturbed characteristic polynomial have
the same limiting distribution as the eigenvalues obtained by changing
the matrix perturbation from \(\mathbf Z_N(s,c_0)\) to
\(\mathbf Z_N(s,c_1)\). The original formulation imposes abstract hypotheses on
the limiting noncommutative distribution of the source; we refer to
\cite{HallHo} for that formulation. Our two results below state directly
the source assumptions needed here (which can be shown to satisfy the abstract hypotheses alluded to above from \cite{HallHo} but we will not do this explicitly here; this is simply to avoid introducing abstract preliminaries which are not the main point of the paper).

If \(P\) is monic of degree \(N\), and \(\mathbf M\) is an
\(N\times N\) matrix, write
\[
 \nu_P\defeq\frac1N\sum_{P\left(\zeta\right)=0}\delta_\zeta,
 \qquad
 L_{\mathbf M}\defeq\frac1N\sum_{j=1}^N\delta_{\lambda_j\left(\mathbf M\right)}.
\]
Both sums count multiplicities. We write
\(\rho_N\overset{\mathrm w}{\longrightarrow}\rho\) for weak convergence
against bounded continuous functions, and \(T_*\rho\) for pushforward
under \(T\). We use the normalized trace
\(\tr_N=N^{-1}\operatorname{Tr}\), the operator norm \(\|\cdot\|\), and the
Hilbert--Schmidt norm
\(\|\mathbf M\|_{\mathrm{HS}}^2=\operatorname{Tr}(\mathbf M\mathbf M^*)\).
Write \(\dd A(z)=\dd x\,\dd y\) for planar Lebesgue measure,
\(\D=\{z\in\C:|z|<1\}\), and
\(\mathscr D(z,r)=\{w\in\C:|w-z|<r\}\).
For \(S>0\) and \(|c|<S\), define the ellipse and its uniform probability measure by
\begin{equation}\label{EqEllipseDefinition}
 \mathscr E_{S,c}\defeq
 \left\{\sqrt S\,u+\frac{c}{\sqrt S}\bar u:u\in\C,\ \left|u\right|\leq1\right\},
\end{equation}
\begin{equation}\label{EqEllipseDensity}
 \dd\mu_{S,c}\left(z\right)\defeq\frac{S}{\pi\left(S^2-\left|c\right|^2\right)}
    \mathbf1_{\mathscr E_{S,c}}\left(z\right)\dd A\left(z\right).
\end{equation}
For \(c=|c|\mathrm{e}^{2\mathrm{i}\theta}\), the semiaxes are
\(\sqrt S\pm|c|/\sqrt S\), rotated through \(\theta\);
see Figure~\ref{FigTargetEllipse}.

\begin{figure}[!ht]
\centering
\begingroup
\def\EllipseVariance{1}
\def\EllipseModulus{0.5}
\def\EllipseAngle{25}
\pgfmathsetmacro{\EllipseMajor}{sqrt(\EllipseVariance)+\EllipseModulus/sqrt(\EllipseVariance)}
\pgfmathsetmacro{\EllipseMinor}{sqrt(\EllipseVariance)-\EllipseModulus/sqrt(\EllipseVariance)}
\begin{tikzpicture}[x=1.9cm,y=1.9cm,>=stealth,
  line cap=round,line join=round,font=\footnotesize]
  \path[use as bounding box] (-2.15,-1.02) rectangle (2.15,1.18);
  \draw[gray!55,thin,->] (-1.95,0) -- (1.95,0)
    node[below left,text=gray!80] {\(\operatorname{Re}z\)};
  \draw[gray!55,thin,->] (0,-0.95) -- (0,1.06)
    node[above right,text=gray!80] {\(\operatorname{Im}z\)};
  \begin{scope}[rotate=\EllipseAngle]
    \filldraw[fill=PlanarBlue!10,draw=PlanarBlue,line width=0.8pt]
      (0,0) ellipse[x radius=\EllipseMajor,y radius=\EllipseMinor];
    \draw[PlanarBlue!55,densely dashed,thin] (-\EllipseMajor,0) -- (0,0);
    \draw[BoundaryTeal!60,densely dashed,thin] (0,-\EllipseMinor) -- (0,0);
    \draw[PlanarBlue,line width=0.8pt,->] (0,0) -- (\EllipseMajor,0);
    \draw[BoundaryTeal,line width=0.8pt,->] (0,0) -- (0,\EllipseMinor);
    \coordinate (majorlabelpoint) at ({0.75*\EllipseMajor},0);
    \coordinate (minorlabelpoint) at (0,{0.8*\EllipseMinor});
  \end{scope}
  \draw[gray!60,thin] (0,0) -- (0.43,0);
  \draw[gray!85,thin] (0.34,0)
    arc[start angle=0,end angle=\EllipseAngle,radius=0.34];
  \node[text=gray!85] at ({0.5*cos(\EllipseAngle/2)},
    {0.5*sin(\EllipseAngle/2)}) {\(\theta\)};
  \fill[black!80] (0,0) circle[radius=0.6pt];
  \node[below left,inner sep=2pt] at (0,0) {\(0\)};
  \node[text=PlanarBlue] (majorlabel) at (1.05,1.03)
    {\(\sqrt S+\dfrac{|c|}{\sqrt S}\)};
  \draw[PlanarBlue,thin] (majorlabel.south) -- (majorlabelpoint);
  \node[text=BoundaryTeal] (minorlabel) at (-1.08,0.78)
    {\(\sqrt S-\dfrac{|c|}{\sqrt S}\)};
  \draw[BoundaryTeal,thin] (minorlabel.south east) -- (minorlabelpoint);
  \node[text=PlanarBlue] at (-0.67,-0.34) {\(\mathscr E_{S,c}\)};
\end{tikzpicture}
\endgroup
\caption{The ellipse \(\mathscr E_{S,c}\), with semiaxes
\(\sqrt S\pm|c|/\sqrt S\) and rotation angle \(\theta=\arg(c)/2\).}
\label{FigTargetEllipse}
\end{figure}

Our first theorem identifies the heat-evolved zeros for planar targets.
It allows a convergent scalar source together with an independent
elliptic Gaussian source, which need not be normal. We refer to Section \ref{SectionSimulations} for figures of simulations illustrating the result.

\begin{thm}\label{ThmPlanar}
Fix \(s>0\), \(s_0\geq0\), and \(d,\kappa_0,\kappa_1\in\C\) satisfying
\[
 \left|d\right|\leq s_0,
 \qquad
 \left|\kappa_j\right|\leq s\quad \left(j=0,1\right).
\]
Let \((\mathbf{Y}_N)_{N\geq1}\) be a sequence of centred, jointly
real-Gaussian matrices satisfying
\[
 \E\left[\left(\mathbf{Y}_N\right)_{ij}\overline{\left(\mathbf{Y}_N\right)_{kl}}\right]
 =\frac{s_0}{N}\delta_{ik}\delta_{jl},
 \qquad
 \E\left[\left(\mathbf{Y}_N\right)_{ij}\left(\mathbf{Y}_N\right)_{kl}\right]
 =\frac{d}{N}\delta_{il}\delta_{jk}.
\]
For each \(j=0,1\), let
\((\mathbf Z_N(s,\kappa_j))_{N\geq1}\) be a sequence of centred elliptic
Gaussian matrices with covariances \eqref{EqHallCovariance} at \(c=\kappa_j\).  Assume that the
sigma-algebras
\[
 \sigma\left(\mathbf{Y}_N:N\geq1\right)
 \quad\text{and}\quad
 \sigma\!\left(
 \mathbf Z_N\left(s,\kappa_j\right):N\geq1,\ j\in\left\{0,1\right\}
 \right)
\]
are independent.  Put
\begin{equation}\label{EqTotalCovariances}
 S\defeq s_0+s,
 \qquad
 c_j\defeq d+\kappa_j\quad \left(j=0,1\right),
\end{equation}
and assume \(\left|c_0\right|,\left|c_1\right|<S\).  If
\((b_N)\) is a deterministic scalar sequence with \(b_N\to b\in\C\), set
\[
 p_N\left(z\right)\defeq\det\!\left(z\mathbf{I}-b_N\mathbf{I}-\mathbf{Y}_N
                         -\mathbf Z_N\left(s,\kappa_0\right)\right),
 \qquad
 q_N\defeq\exp\!\left(-\frac{c_1-c_0}{2N}\partial_z^2\right)p_N.
\]
Then, for every joint coupling satisfying the preceding covariance and
independence assumptions,
\begin{equation}\label{EqPlanarJointLimit}
 \begin{aligned}
 \nu_{q_N}
 &\overset{\mathrm{w}}{\longrightarrow}
 \left(z\mapsto b+z\right)_*\mu_{S,c_1},\\
 L_{\,b_N\mathbf{I}+\mathbf{Y}_N+\mathbf Z_N\left(s,\kappa_1\right)}
 &\overset{\mathrm{w}}{\longrightarrow}
 \left(z\mapsto b+z\right)_*\mu_{S,c_1},
 \end{aligned}
 \qquad\text{almost surely}.
\end{equation}
\end{thm}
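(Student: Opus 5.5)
The plan is to reduce to a single elliptic source, represent \(q_N\) as a conditional Gaussian average, and identify the zero measure of \(q_N\) through logarithmic potentials.

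\textbf{Step 1: reduction to one elliptic source.} The second line of \eqref{EqPlanarJointLimit} needs no heat flow. By independence, \(\mathbf Y_N+\mathbf Z_N(s,\kappa_1)\) is a centred elliptic Gaussian matrix with covariances \eqref{EqHallCovariance} at variance \(S\) and pseudo-covariance \(c_1\). Its empirical spectral measure converges almost surely to \(\mu_{S,c_1}\) by the elliptic law, which holds uniformly over couplings because only the marginal law enters. Adding the scalar \(b_N\mathbf I\) shifts the eigenvalues by \(b_N\to b\), giving the pushforward. In the same way \(\mathbf W_N:=\mathbf Y_N+\mathbf Z_N(s,\kappa_0)\) has the law of \(\mathbf Z_N(S,c_0)\), and \(p_N(z)=\det(z-b_N-\mathbf W_N)\). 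It therefore suffices to treat \(p_N\) with a single elliptic source.

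\textbf{Step 2: heat identity.} Let \(\mathbf X_N\) be independent of everything, centred Gaussian, with pseudo-covariance \(\E[(\mathbf X_N)_{ij}(\mathbf X_N)_{kl}]=\frac{c_1-c_0}{N}\delta_{il}\delta_{jk}\). For instance take \(\mathbf X_N=\sqrt{c_1-c_0}\,\mathbf H_N\) with \(\mathbf H_N\) a GUE, optionally plus an independent Ginibre part with zero pseudo-covariance. Expanding the determinant and applying Wick's formula gives the exact identity
\[
 q_N(z)=\E\bigl[\det(z-b_N-\mathbf W_N-\mathbf X_N)\,\big|\,\mathbf W_N\bigr].
\]
Only the pseudo-covariance of \(\mathbf X_N\) contributes, because the mixed moments \(\E[X\overline X]\) never appear in the expansion of a holomorphic polynomial.

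\textbf{Step 3: logarithmic potentials.} Since \(q_N\) is monic of degree \(N\), it suffices to show that \(\frac1N\log|q_N(z)|\to U(z):=\int\log|z-b-w|\,\dd\mu_{S,c_1}(w)\) in \(L^1_{\mathrm{loc}}\), almost surely; the Laplacian then identifies the limit. I would split the plane into two regions.

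\emph{Exterior of \(b+\mathscr E_{S,c_1}\).} Here I would argue holomorphically. The Stieltjes transform \(G_N=\frac1N q_N'/q_N\) satisfies, to leading order, the complex Burgers equation \(\partial_t G=-G\,\partial_z G\) in the pseudo-covariance variable. The exterior Cauchy transform of \(\mu_{S,c}\) equals that of a semicircle of complex variance \(c\), which solves this equation exactly in \(c\). Concretely, I would compute \(\frac1N\sum\zeta^k\) over the zeros of \(q_N\) through the conditional identity of Step 2. These moments are polynomials in the traces \(\tr_N(\mathbf W_N^m)\), which converge almost surely with Gaussian concentration and summable fluctuations. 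This gives \(\frac1N\log q_N\to\int\log(z-b-w)\,\dd\mu_{S,c_1}\) for large \(|z|\), and then up to the boundary of the ellipse by analytic continuation together with Vitali's theorem, provided no zeros escape. That no zeros escape follows from a norm bound \(\|\mathbf W_N\|=O(1)\) and a Gauss--Lucas-type or Hermite-type control of the heat operator.

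\emph{Interior.} Here the first thing to try is a conditional Jensen upper bound:
\[
 \tfrac1N\log|q_N(z)|\le \tfrac1{2N}\log \E\bigl[|\det(z-b_N-\mathbf W_N-\mathbf X_N)|^2\,\big|\,\mathbf W_N\bigr].
\]
The right side would be controlled via Hermitization and the Girko--Brown measure of the free sum of the limiting source with the element \(\mathbf X_N\).

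\textbf{Main obstacle.} The interior is where I expect the argument to be hardest. The freedom in Step 2 to add zero-pseudo-covariance noise lets one choose \(\mathbf X_N\) so that \(\mathbf W_N+\mathbf X_N\) is again elliptic. However its variance is then \(S+r\) with \(r\ge|c_1-c_0|\), so the Jensen bound produces the potential of a larger ellipse, not \(U\).

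To close the gap I would combine three facts:
\begin{itemize}
 \item the exact exterior identification from the first region, which holds outside the target ellipse;
 \item the fact that any subsequential limit \(u\) of \(\frac1N\log|q_N|\) is subharmonic, with Laplacian of total mass \(2\pi\);
 \item a lower bound obtained from the exact formula \(\frac1N\log|q_N(z)|\ge\frac1N\log|q_N(z_0)|-\cdots\) along lines, or more cleanly from a second-moment bound \(\E|q_N(z)|^2\lesssim \mathrm e^{2NU(z)+o(N)}\) paired with anti-concentration of \(q_N(z)\).
\end{itemize}
Together these should force \(u=U\) on the interior, where \(\Delta U\) is uniform on the ellipse. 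If the second-moment route fails, the fallback is to vary \(c_1\) continuously from \(c_0\). Along this path both the \(L^1_{\mathrm{loc}}\) limit and the Burgers characteristics are uniquely determined by the initial data \(\mu_{S,c_0}\) and by the exterior data. Uniqueness of the resulting subharmonic evolution would then pin down the interior.
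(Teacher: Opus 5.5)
Your Step~1 reduction is the paper's, and your conditional identity in Step~2 is correct. The genuine gap is in the interior: nothing in your plan yields a \emph{lower} bound for \(\frac1N\log|q_N|\) inside the ellipse, and without one the limit is not determined.

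Everything you extract in the exterior step is holomorphic, since the moments \(\frac1N\sum\zeta^k\) are polynomials in \(\tr_N(\mathbf W_N^m)\). Holomorphic moments do not determine a planar measure. After translation by \(b\), the confocal uniform laws \(\mu_{S',c_1}\) with \(|c_1|<S'\le S\) all have the same holomorphic moments as \(\mu_{S,c_1}\). So does their degenerate member, the semicircle of variance \(|c_1|\) rotated onto the focal segment of \(\mathscr E_{S,c_1}\). Consequently they all have the same logarithmic potential outside \(\mathscr E_{S,c_1}\). For the focal semicircle one can check, for instance in Joukowski coordinates, that its potential lies \emph{below} that of \(\mu_{S,c_1}\) inside the ellipse. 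So your facts (a) and (b), together with any upper bound \(u\le U\) or \(u\le V_{c_1}\), are satisfied by a wrong candidate.

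Both interior tools you offer are upper bounds: the conditional Jensen bound, which is also lossy as you note, and the second-moment bound \(\E|q_N(z)|^2\le\mathrm e^{2NU(z)+o(N)}\). The anti-concentration you would pair with the latter means summable bounds on \(\Prob\bigl(|q_N(z)|^2<\mathrm e^{-\varepsilon N}\E|q_N(z)|^2\bigr)\). That is essentially the interior statement itself, and no mechanism is given for it. Unlike \(p_N\), \(q_N\) is not a product over a known spectrum, so the factorisation and Jensen arguments available for \(p_N\) do not apply. The fallback via uniqueness of a subharmonic Burgers evolution has no theorem behind it; inside the ellipse that evolution is exactly what the Hall--Ho conjecture asserts.

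The paper gets the lower bound in integrated form, using an exact \emph{pathwise} identity you never invoke. This is the Driver--Hall--Kemp isometry
\(\int|q_N|^2\,\dd\gamma_{N;v,c_1,a}=\int|p_N|^2\,\dd\gamma_{N;v,c_0,a}\),
used for shifted Gaussian weights with \(v<1\) and \(a=(1-v)u\), \(u\in\D\), in the normalisation \(S=1\), \(b_N=0\). The right side has exponential rate \(-1+(1-v)|u|^2\). The lower bound on that rate comes from the initial elliptic law via factorisation and Jensen on small disks. The upper bound comes from the Christoffel--Mehler kernel together with Markov and Borel--Cantelli. The same rate therefore holds for \(q_N\), with no anti-concentration needed.

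The same identity at \(v=1\), combined with the evaluation kernel, gives the sharp almost-sure bound \(\frac1N\log|q_N|\le V_{c_1}+\varepsilon_N\) on all of \(\C\). This replaces your conditional Jensen bound. The quadratic completion
\(2V_{c_1}-Q_{v,c_1,(1-v)u}=-1+(1-v)|u|^2-B_{v,c_1}(z-L_{c_1}(u))\),
with \(B_{v,c_1}\) positive definite, then forces any subharmonic limit \(\psi\le V_{c_1}\) to equal \(V_{c_1}\) at \(L_{c_1}(u)\). Density of these points gives equality on the whole interior. The interior already carries mass one, so no exterior analysis is needed at all.

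Two smaller points. Almost-sure convergence is not a property of the marginals, so the arbitrary-coupling elliptic law needs summable concentration plus Borel--Cantelli; the paper uses the determinantal structure for this. Also, continuing your exterior identification down to \(\partial\mathscr E_{S,c_1}\) by Vitali requires a zero-free region outside the ellipse. A Walsh-type confinement of the zeros to a larger disk does not provide one.
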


The target-matrix convergence in \eqref{EqPlanarJointLimit} is the
classical elliptic law \cite[Theorem~1.8]{NguyenORourke}; we recall the
arbitrary-coupling version that we need in Lemma~\ref{LemAlmostSureEllipticLaw}.
The actual novel assertion proved here is the convergence of the heat-evolved zeros.
Taking \(s_0=d=0\) gives the zero-source and scalar-source cases
throughout the open covariance disk. For a nonzero Gaussian source,
the total covariances \(S\) and \(c_j\) absorb its contribution.

Moving on, to state the boundary case result, we introduce additive convolution through
analytic transforms. For a compactly supported probability measure
\(\rho\) on \(\R\), its Cauchy transform is
\begin{equation}\label{EqCauchyTransformDefinition}
 G_\rho\left(z\right)\defeq\int_{\R}\frac{\dd\rho\left(x\right)}{z-x},
 \qquad z\in\C\setminus\supp\rho.
\end{equation}
Since \(G_\rho(z)=z^{-1}+O(z^{-2})\) at infinity, it has an inverse
branch \(K_\rho\) for small nonzero \(w\), characterized by
\[
 G_\rho\left(K_\rho\left(w\right)\right)=w,
 \qquad K_\rho\left(w\right)=\frac1w+O\left(1\right).
\]
The \(R\)-transform
\begin{equation}\label{EqRTransformDefinition}
 R_\rho\left(w\right)\defeq K_\rho\left(w\right)-\frac1w
\end{equation}
extends analytically to zero. For compactly supported probabilities
\(\rho,\eta\) on \(\R\), their additive convolution
\(\rho\boxplus\eta\) is the unique compactly supported probability satisfying
\begin{equation}\label{EqAdditiveConvolutionDefinition}
 R_{\rho\boxplus\eta}\left(w\right)=R_\rho\left(w\right)+R_\eta\left(w\right)
 \qquad\text{for \(w\) sufficiently close to zero},
\end{equation}
see
\cite[Theorems~4.5 and~4.12]{SpeicherLectures}.We shall
use it only with the centred semicircle law of variance \(s\), given by
\[
 \dd\mathfrak{sc}_s\left(x\right)\defeq
 \frac{\sqrt{4s-x^2}}{2\pi s}
 \mathbf1_{\left[-2\sqrt s,2\sqrt s\right]}\left(x\right)\dd x,
 \qquad R_{\mathfrak{sc}_s}\left(w\right)=sw.
\]

Our second theorem identifies the zero limit when the target Gaussian
perturbation is aligned with the affine line containing the source spectrum.
The limit is the additive convolution just defined,
followed by the corresponding rotation and translation. We refer to Section \ref{SectionSimulations} for figures of simulations illustrating the result.

\begin{thm}\label{ThmBoundary}
Fix \(s>0\), \(u\in\C\) with \(\left|u\right|=1\), and
\(c_0\in\C\) with \(\left|c_0\right|\leq s\).  Put \(c_1=su^2\).
Let \(\mathbf{A}_N=\mathbf{A}_N^*\) be deterministic and satisfy
\[
 \sup_N\left\|\mathbf{A}_N\right\|<\infty,
 \qquad
 L_{\mathbf{A}_N}\overset{\mathrm{w}}{\longrightarrow}\mu_0,
\]
and let \((b_N)\) be a deterministic scalar sequence with
\(b_N\to b\in\C\).  Let
all the matrices \(\mathbf{Z}_N(s,c_j)\), \(N\geq1\), \(j=0,1\), be
defined on a common probability space.  Assume that, for each \(N\) and
\(j\), the entries of \(\mathbf{Z}_N(s,c_j)\) are centred and jointly
real-Gaussian, and satisfy
\[
 \E\left[\left(\mathbf{Z}_N\left(s,c_j\right)\right)_{ab}
       \overline{\left(\mathbf{Z}_N\left(s,c_j\right)\right)_{kl}}\right]
 =\frac{s}{N}\delta_{ak}\delta_{bl},
 \qquad
 \E\left[\left(\mathbf{Z}_N\left(s,c_j\right)\right)_{ab}
       \left(\mathbf{Z}_N\left(s,c_j\right)\right)_{kl}\right]
 =\frac{c_j}{N}\delta_{al}\delta_{bk}.
\]
The matrices may be coupled arbitrarily with each other and across \(N\).
Define
\[
 p_N\left(z\right)\defeq\det\!\left(z\mathbf{I}-b_N\mathbf{I}
              -u\mathbf{A}_N-\mathbf{Z}_N\left(s,c_0\right)\right),
 \qquad
 q_N\defeq\exp\!\left(-\frac{c_1-c_0}{2N}\partial_z^2\right)p_N.
\]
Then, almost surely,
\begin{equation}\label{EqBoundaryLimit}
 \begin{aligned}
 \nu_{q_N}
 &\overset{\mathrm{w}}{\longrightarrow}
 \left(x\mapsto b+ux\right)_*\left(\mu_0\boxplus\mathfrak{sc}_s\right),\\
 L_{\,b_N\mathbf{I}+u\mathbf{A}_N+\mathbf{Z}_N\left(s,c_1\right)}
 &\overset{\mathrm{w}}{\longrightarrow}
 \left(x\mapsto b+ux\right)_*\left(\mu_0\boxplus\mathfrak{sc}_s\right).
 \end{aligned}
\end{equation}
\end{thm}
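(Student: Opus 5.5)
We reduce the statement to the canonical case $u=1$, $b_N=0$, and then handle the heat-evolved zeros through an exact Gaussian heat identity, followed by a log-potential comparison with the Hermitian target matrix.

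\textbf{Reduction.} First we normalise. Translation by $b_N$ commutes with $\partial_z^2$, and $b_N\to b$, so we may take $b_N=0$. For the rotation, write $z=uw$. Then $\partial_w^2=u^2\partial_z^2$. Moreover $u^{-1}\mathbf Z_N(s,c)$ has the law of $\mathbf Z_N(s,\bar u^{2}c)$. Hence $\tilde p_N(w)\defeq u^{-N}p_N(uw)$ is the characteristic polynomial of $\mathbf A_N+\mathbf Z_N(s,\bar u^2c_0)$, and $u^{-N}q_N(uw)$ is its heat evolution with parameter $\bar u^2(c_1-c_0)=s-\bar u^2c_0$. So we may assume $u=1$ and $c_1=s$, with $|c_0|\le s$ arbitrary.

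\textbf{Target matrix.} In the reduced setting, $\mathbf Z_N(s,s)$ is GUE of variance $s$. The second line of \eqref{EqBoundaryLimit} is therefore the classical almost sure convergence $L_{\mathbf A_N+\mathbf{GUE}}\to\mu_0\boxplus\mathfrak{sc}_s$, valid for deterministic bounded $\mathbf A_N$ under arbitrary coupling. This follows from Gaussian concentration of the Stieltjes transform plus Borel--Cantelli, with the Pastur equation $G(z)=G_{\mu_0}(z-sG(z))$ identifying the limit.

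\textbf{Exact heat identity.} The key input is the following identity. Suppose $\mathbf W$ is independent of $\mathbf Z_N(s,c_0)$ and has law $\mathbf Z_N(0,c_1-c_0)$ in the formal sense. Its moments are those of the "complex symmetric-type" Gaussian whose only nonzero covariance is $\E[\mathbf W_{ij}\mathbf W_{kl}]=\frac{c_1-c_0}{N}\delta_{il}\delta_{jk}$. Then
\[
\E_{\mathbf W}\det(z-\mathbf M-\mathbf W)=\exp\!\left(-\frac{c_1-c_0}{2N}\partial_z^2\right)\det(z-\mathbf M).
\]
This is an exact Wick computation: only pairings of $\mathbf W_{ij}$ with $\mathbf W_{ji}$ survive in the determinant expansion. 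Since $q_N$ is analytic in the covariance and the identity is polynomial, it holds even though $\mathbf W$ is not a genuine random matrix. Consequently $q_N$ is the conditional expectation, given $\mathbf Z_N(s,c_0)$, of characteristic polynomials whose total covariance moves from $c_0$ to $c_1$.

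\textbf{Zeros of $q_N$.} We aim to show that almost surely, for Lebesgue-a.e.\ $z$,
\[
\frac1N\log|q_N(z)|\to\int\log|z-x|\,\dd(\mu_0\boxplus\mathfrak{sc}_s)(x)\eqqcolon U(z),
\]
together with a uniform $L^1_{\mathrm{loc}}$ bound. The convergence of $\nu_{q_N}$ then follows by taking distributional Laplacians.

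The upper bound comes from expanding $q_N=\sum_k\frac{1}{k!}\big(\frac{t}{2N}\big)^k\partial^{2k}p_N$. The derivatives are controlled by Cauchy estimates on circles, using $\frac1N\log|p_N|\le \log(|z|+C)$ from the operator-norm bound on $\mathbf A_N+\mathbf Z_N$.

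The lower bound is the crux. Off the real axis we use a saddle-point representation. Writing $t=c_1-c_0$,
\[
q_N(z)=\sqrt{\tfrac{N}{2\pi t}}\int_{\mathcal C}\mathrm e^{-N(z-w)^2/(2t)}\,p_N(w)\,\dd w
\]
along a suitably rotated contour, with the degenerate case $t=0$ handled trivially. Since $\frac1N\log p_N(w)\to\int\log(w-\lambda)\,\dd\mu_{c_0}(\lambda)$, where $\mu_{c_0}$ is the Brown measure of $\mathbf A_N+$ elliptic, the exponent becomes $N\Phi(w)$. The saddle-point equation reads $(w-z)/t=G_{c_0}(w)$, where $G_{c_0}$ denotes the Cauchy transform of the source-plus-elliptic law outside its support. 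This is precisely the subordination relation for $G_{\mu_0\boxplus\mathfrak{sc}_s}$ once we use $R$-additivity. The critical value then equals $U(z)+i\cdot$const.

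Making this rigorous is the main obstacle. It requires choosing a steepest descent contour that avoids the random zeros of $p_N$ (which fill the Brown-measure support), and controlling $\frac1N\log|p_N|$ uniformly on that contour. We would first try contours lying outside a neighbourhood of the support, where $\frac1N\log p_N$ converges uniformly almost surely by the spectral-norm/resolvent estimates of the Gaussian ensembles. Because $c_1=s$ is the boundary case, the zeros of $q_N$ are expected to collapse onto $\R$. The lower bound is then needed only for $z\notin\R$, which is Lebesgue-a.e., with the required saddle point lying outside the support. We also need a subharmonicity/Hartogs argument to upgrade the pointwise $\liminf$ to $L^1_{\mathrm{loc}}$ convergence.
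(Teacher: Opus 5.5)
Your reduction to $u=1$, $b_N=0$, $c_1=s$ and your treatment of the target matrix are fine and agree with the paper. The zero convergence for $q_N$, however, is not established. The lower bound you call the crux is left open, and the route you sketch for it needs inputs the hypotheses do not supply.

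First, it requires $\frac1N\log p_N$ to converge to the log-potential of a Brown measure $\mu_{c_0}$. The convergence must hold as a holomorphic function near the saddle and uniformly along a contour. That amounts to an almost-sure limit law for the nonnormal matrix $\mathbf A_N+\mathbf Z_N(s,c_0)$ under arbitrary coupling, including $|c_0|=s\neq c_0$, where there is no Ginibre component.

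Second, a descent contour through $w_*=z-(s-c_0)G_\mu(z)$, with $\mu=\mu_0\boxplus\mathfrak{sc}_s$, must escape to infinity in the sectors around $\pm\mathrm i\sqrt t$. It therefore generically meets the two-dimensional support of $\mu_{c_0}$. There only upper bounds on $\frac1N\log|p_N|$ are available, and strict decay of the phase would need explicit knowledge of the Brown potential. Already for $\mathbf A_N=0$, $c_0=0$, $s=1$ one gets $w_*=1/G_{\mathfrak{sc}_1}(z)$, which tends to the unit circle, the edge of that support, as $z$ approaches $(-2,2)$.

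Third, your upper bound does not close the argument. Cauchy estimates give only $\frac1N\log|q_N(z)|\le\max_{|w-z|=r}\frac1N\log|p_N(w)|+\frac12\log^+(|t|/r^2)+o(1)$, not $\limsup\le U$. A one-sided lower bound cannot identify the limit: the convolution of $\mu$ with the uniform law on a circle of radius $r$ has potential $\int\max\{\log|z-x|,\log r\}\,\dd\mu(x)\ge U(z)$.

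Finally, there is a sign slip. The correct representation is $q_N(z)=-\mathrm i\sqrt{N/(2\pi t)}\int_{z+\mathrm i\sqrt t\R}\mathrm e^{+N(w-z)^2/(2t)}p_N(w)\,\dd w$, whose saddle equation $w=z-tG(w)$ gives the subordination for $\mu$. Your kernel $\mathrm e^{-N(z-w)^2/(2t)}$ represents the forward flow. With $g=G_{c_0}(w_*)$, its saddle equation gives $z=K_{\mu_0}(g)+(2c_0-s)g$ instead of $z=K_{\mu_0}(g)+sg$.

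The missing idea is that only an upper bound is needed, and only on a real interval. Apply your formal representation with independent formal copies for $q_N$ and $\overline{q_N}$. The same Wick calculus then yields the Hall--Ho identity $\E|q_N(x)|^2=\E|\det(x\mathbf I-\mathbf A_N-\mathbf H_N)|^2$, with $\mathbf H_N$ GUE of variance $s$, which moves the whole problem to the Hermitian target.

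The paper bounds the right side by $\mathrm e^{2N(U(x)+o(1))}$ uniformly on an interval $K\supset\supp\mu$. It cuts $\log|x-y|$ off at $-T$ and applies Gaussian concentration to the resulting Lipschitz trace. It then uses $\mathbb W_1(\E L_{\mathbf A_N+\mathbf H_N},\mu)\to0$ and controls the truncation by the bounded density of $\mu$. Integrating against the arcsine measure $\alpha_K$, Markov and Borel--Cantelli give $\limsup_N\frac1N\log\int_K|q_N|^2\mathrm e^{-2NU}\,\dd\alpha_K\le0$ almost surely.

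Since $\mu$ is the weighted equilibrium measure of $K$ for the weight $\mathrm e^{-U}$, monicity and the weighted Bernstein--Walsh inequality supply the matching lower bound for free. The Mhaskar--Saff asymptotic-extremality theorem then gives $\nu_{q_N}\to\mu$; its interior condition is vacuous because $\supp\mu\subset\R$. This needs neither a contour analysis nor any information about the spectrum of the nonnormal initial matrix.
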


The condition \(c_1=su^2\) makes \(u^{-1}\mathbf Z_N(s,c_1)\)
a GUE matrix of variance \(s\). Thus the spectra of the source and target matrices lie
on the affine line \(b_N+u\R\). The target-matrix convergence in
\eqref{EqBoundaryLimit} is the classical deformed-GUE law
\cite{CapitaineDonatiMartinFeralFevrier}, recalled in
Lemma~\ref{LemDeformedGUE} while the genuinely novel assertion proven here is the heat-evolved zero convergence.
The next corollary records the Ginibre-to-semicircle case singled out
in \cite[Example~2.5]{HallHo}; it follows by taking a zero source and
the real boundary target, see again Section \ref{SectionSimulations} for figures of simulations illustrating the result.

\begin{cor}\label{CorGinibreSemicircle}
Let \((\mathbf{G}_N)_{N\geq1}\) be any sequence of complex Ginibre
matrices: for each \(N\), its entries are independent centred complex
Gaussian variables satisfying
\[
 \E\!\left[\left|\left(\mathbf{G}_N\right)_{ij}\right|^2\right]=\frac1N,
 \qquad
 \E\!\left[\left(\mathbf{G}_N\right)_{ij}^2\right]=0.
\]
Let us set
\[
 p_N\left(z\right)\defeq\det\left(z\mathbf{I}-\mathbf{G}_N\right),\qquad
 q_N\left(z\right)\defeq\exp\!\left(-\frac{1}{2N}\partial_z^2\right)p_N\left(z\right).
\]
Then, almost surely,
 $\nu_{q_N}\overset{\mathrm{w}}{\longrightarrow}\mathfrak{sc}_1$.
\end{cor}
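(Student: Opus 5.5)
This follows from Theorem~\ref{ThmBoundary} by a particular choice of parameters. I will take \(s=1\), \(u=1\), \(b_N=0\), \(\mathbf A_N=0\) for every \(N\), and \(c_0=0\); then \(c_1=su^2=1\). This choice satisfies the hypotheses:
\begin{itemize}
\item \(|c_0|=0\leq s\).
\item \(\mathbf A_N=0\) is Hermitian, \(\sup_N\|\mathbf A_N\|=0\), and \(L_{\mathbf A_N}=\delta_0\), so \(\mu_0=\delta_0\).
\item \(b_N\to b=0\).
\end{itemize}

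Next I match the source matrix. I set \(\mathbf Z_N(s,c_0)\defeq\mathbf G_N\). Its entries are independent centred complex Gaussians with \(\E|(\mathbf G_N)_{ij}|^2=1/N\) and \(\E(\mathbf G_N)_{ij}^2=0\). So \(\E[(\mathbf G_N)_{ab}\overline{(\mathbf G_N)_{kl}}]=\frac1N\delta_{ak}\delta_{bl}\) and \(\E[(\mathbf G_N)_{ab}(\mathbf G_N)_{kl}]=0=\frac{c_0}{N}\delta_{al}\delta_{bk}\). By independence, the real and imaginary parts of all entries form a jointly Gaussian real vector.

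Theorem~\ref{ThmBoundary} also needs target matrices \(\mathbf Z_N(1,1)\), which are GUE matrices of variance \(1\), on the same probability space. It allows arbitrary coupling, so I enlarge the space by a product with an independent probability space carrying a GUE sequence. This leaves the law of \((\mathbf G_N)\), and hence every almost-sure statement about \(\nu_{q_N}\), unchanged. Because \(q_N\) is a function of \(\mathbf G_N\) alone, the conclusion transfers back to the original space. If \(\mathbf G_N\) is only specified in law, or defined on a space that cannot be enlarged directly, the same product construction works, since almost-sure convergence of \(\nu_{q_N}\) depends only on the joint law of the sequence \((\mathbf G_N)\).

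With these choices the polynomials agree:
\[
 p_N(z)=\det\left(z\mathbf I-\mathbf G_N\right),
 \qquad
 q_N=\exp\!\left(-\frac{1}{2N}\partial_z^2\right)p_N,
\]
because \(c_1-c_0=1\). By \eqref{EqBoundaryLimit}, almost surely \(\nu_{q_N}\overset{\mathrm w}{\longrightarrow}(x\mapsto x)_*(\delta_0\boxplus\mathfrak{sc}_1)\).

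It remains to identify \(\delta_0\boxplus\mathfrak{sc}_1\). From \(G_{\delta_0}(z)=1/z\) we get \(K_{\delta_0}(w)=1/w\), so \(R_{\delta_0}=0\). By \eqref{EqAdditiveConvolutionDefinition}, \(R_{\delta_0\boxplus\mathfrak{sc}_1}=R_{\mathfrak{sc}_1}\). Uniqueness of the compactly supported measure with a given \(R\)-transform then gives \(\delta_0\boxplus\mathfrak{sc}_1=\mathfrak{sc}_1\). Hence \(\nu_{q_N}\overset{\mathrm w}{\longrightarrow}\mathfrak{sc}_1\) almost surely.

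There is no genuine obstacle here. The only step needing care is the probability-space enlargement that supplies the target GUE matrices, which the arbitrary-coupling clause of Theorem~\ref{ThmBoundary} is designed to permit.
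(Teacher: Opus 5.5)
Your proposal is correct and is exactly the paper's argument: the corollary is Theorem~\ref{ThmBoundary} specialized to zero source $\mathbf A_N=0$, $b_N=0$, $u=s=1$ and $c_0=0$ (so $c_1=1$), combined with the identification $\delta_0\boxplus\mathfrak{sc}_1=\mathfrak{sc}_1$. Your product-space enlargement is valid but not needed: $(\mathbf G_N+\mathbf G_N^*)/\sqrt2$ is already a variance-one GUE on the original space, and in any case the zero convergence uses only Proposition~\ref{PropHermitianEndpoint}, which involves no target matrices.
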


 Finally, we note that there is a question of universality of the deformation phenomenon obtained via heat flow as studied here, see \cite{HallHo} for further interesting discussion on this phenomenon. The specifics of the proof do not extend in any obvious way, as far as I can tell, to other settings but the meta-idea of comparing with a tractable ensemble of random matrices may be useful. It would be interesting to have a more conceptual understanding of this.

\subsection{Relation to previous work}

We now review the relevant literature which in the past 5-10 years has witnessed intense growth.
Polynomial heat flow has been studied through its interacting zeros
\cite{TaoHeatFlow,TaoHeatFlowCircle}, and also occurs as a tool in interacting particle
systems \cite{AssiotisInteracting,NicaQuastelRemenik}.
Several recent works establish limiting zero evolutions for other
input classes. Hall, Ho, Jalowy and Kabluchko treat the planar Gaussian
analytic function \cite{HallHoJalowyKabluchkoGAF}, polynomials with
independent coefficients \cite{HallHoJalowyKabluchkoZeros}, and repeated
differential operators \cite{HallHoJalowyKabluchkoDifferentiation}.
Hadamard products, finite free convolutions and repeated differential
operators are also studied through coefficient profiles by Jalowy,
Kabluchko and Marynych \cite{JalowyKabluchkoMarynychProfiles}.
Kabluchko's backward heat-flow result \cite{KabluchkoHeat} and the
operator framework of Campbell and Jalowy \cite{CampbellJalowy} use
real-rooted input polynomials. More recently, H{\"o}fert, Jalowy and Kabluchko
\cite{HofertJalowyKabluchkoHeat} allow complex roots for increasing powers
of one fixed polynomial. The present results concern characteristic polynomials with strongly
dependent coefficients, arising from elliptic Gaussian matrices and
deterministic Hermitian sources with elliptic Gaussian perturbations which do not fall into the scope of the above works.

Repeated differentiation provides another setting in which an operation
on polynomials induces an evolution of their zero distributions.
Steinerberger \cite{SteinerbergerTransport} proposed a nonlocal transport
equation for the density of real roots, and O'Rourke and Steinerberger
\cite{ORourkeSteinerbergerTransport} developed a corresponding model
for rotationally invariant distributions in the complex plane.
Galligo \cite{GalligoComplexMotion} proposed a coupled system for
anisotropic complex-root motion. Rigorous limiting results for a number
of derivatives proportional to the degree were obtained by Feng and Yao
\cite{FengYaoDerivatives} for polynomials with independent coefficients
and by Hoskins and Kabluchko \cite{HoskinsKabluchkoDerivatives} for
real-rooted polynomials. In the latter setting, the limiting law is a
rescaled fractional free convolution power.

Randomized differentiation gives a further connection with random
matrices. Galligo, Najnudel and Vu \cite{GalligoNajnudelVuRandomized}
study randomized derivative operations and their effect on limiting zero
distributions. For real roots in a fixed bounded interval, Galligo and
Najnudel \cite{GalligoNajnudelDynamics} show that a family of such
operations, related to matrix minors, has the same macroscopic limit as
ordinary differentiation. For complex roots, Galligo, Najnudel and Vu
\cite{GalligoNajnudelVuRadial} prove a radial limiting law for a
structured deterministic sampling on concentric circles. Najnudel and Vu
\cite{NajnudelVuRadial} subsequently weaken the sampling assumptions,
while Hall and Perales \cite{HallPeralesRadial} give a simpler proof and
extend the analysis to further differential operators.

Coming back to our immediate setting the proofs below combine certain established Gaussian identities and some potential-theoretic
methods. Let us comment on the relevant inputs from the literature. Hall and Ho prove the matrix second-moment identity
\cite[Theorem~2.7]{HallHo} and convergence of holomorphic moments under
their hypotheses \cite[Theorem~5.1]{HallHo}. Holomorphic moments alone do
not determine a planar probability measure. The scalar norm identity is
a Euclidean specialization of Driver, Hall and Kemp's complex-time
Segal--Bargmann transform \cite{DriverHallKemp}; the characteristic-polynomial
kernel is a case of Akemann and Vernizzi's formula \cite{AkemannVernizzi}.
Our planar argument uses the exponential rates of shifted Gaussian norms
to identify the potential inside the target ellipse. It is related to
the potential-identification methods of Bayraktar \cite{BayraktarMass}
and Bloom--Levenberg \cite{BloomLevenberg}, with a family of shifted
norm rates as its input. Finally, at the boundary, we adapt the regularized-log
determinant estimate of Ben Arous, Bourgade and McKenna
\cite{BenArousBourgadeMcKenna} and use an interval version of weighted
polynomial extremality \cite{MhaskarSaff,BloomWeighted}.

\subsection{Strategy of proof}\label{SectionProofOutline}

The matrix limits in Theorems~\ref{ThmPlanar} and~\ref{ThmBoundary}
are classical. Our task is then to transfer information about these Gaussian
ensembles to the zeros of the heat-evolved characteristic polynomial.
The two exact identities used for this transfer are basically known:
the scalar Gaussian norm identity of Driver--Hall--Kemp
(Proposition~\ref{PropNormTransfer}) and the matrix second-moment
identity of Hall--Ho (Proposition~\ref{PropMatrixDeformation}).
Figure~\ref{FigProofStrategy} shows what each identity transfers and
how the transferred estimates determine the zero measure.

Weak convergence of the matrix eigenvalue measures alone does not
provide these estimates. The logarithmic kernel is singular, and
expected squared determinants involve an exponential of its integral.
For the planar case we need matching exponential rates for a family of shifted
Gaussian norms, together with a global growth bound while for the boundary case
a uniform upper bound for an expected squared determinant suffices,
because monicity and weighted polynomial extremality supply the
matching lower bound for the weighted polynomial norm.

\textbf{The planar case.}
After adding Gaussian covariances, scaling and translation, the initial
polynomial is \(p_N(z)=\det(z\mathbf I-\mathbf X_{N,c_0})\), where
\(\mathbf X_{N,c_0}\) is elliptic Gaussian with total variance one.
The target parameter satisfies \(|c_1|<1\), and
\(q_N=\exp(-(c_1-c_0)\partial_z^2/(2N))p_N\).
For \(v>\max\{|c_0|,|c_1|\}\), let \(\gamma_{N;v,c,a}\) be the complex
Gaussian law with mean \(a\) and centered moments
\(\E[|Z-a|^2]=v/N\), \(\E[(Z-a)^2]=c/N\).
Proposition~\ref{PropNormTransfer}, the polynomial specialization of
\cite[Theorem~1.6 and Section~1.4]{DriverHallKemp}, gives the exact identity
\[
 \int_{\C}\left|q_N\left(z\right)\right|^2
       \dd\gamma_{N;v,c_1,a}\left(z\right)
 =\int_{\C}\left|p_N\left(z\right)\right|^2
       \dd\gamma_{N;v,c_0,a}\left(z\right).
\]
It holds for every realization and every shift \(a\).

Choose \(\max\{|c_0|,|c_1|\}<v<1\) and shifts \(a=(1-v)u\), with
\(u\in\D\). The initial elliptic law supplies a lower bound for the
norm on the right. The characteristic-polynomial
kernel and Mehler's formula supply an expected upper bound, which
Markov's inequality and Borel--Cantelli turn into the matching
almost-sure bound. Proposition~\ref{PropShiftedExponent} thus gives
\[
 \begin{gathered}
 J_N\left(u\right)\defeq
 \int_{\C}\left|p_N\left(z\right)\right|^2
       \dd\gamma_{N;v,c_0,\left(1-v\right)u}\left(z\right),\\
 \frac1N\log J_N\left(u\right)\longrightarrow M\left(u\right),
 \qquad M\left(u\right)\defeq-1+\left(1-v\right)\left|u\right|^2,
 \end{gathered}
\]
simultaneously on any prescribed countable dense subset of \(\D\).
The exact identity transfers these rates to the corresponding norms
of \(q_N\).

To see why this family of estimates determines the zeros, write the
Gaussian density as a constant times \(\mathrm{e}^{-NQ_{v,c,a}}\),
with \(Q_{v,c,a}(a)=0\), and put
\(h_N=N^{-1}\log|q_N|\), \(V_c=Q_{1,c,0}/2-1/2\), and
\(L_c(u)=u+c\overline u\).
The target ellipse is \(L_{c_1}(\overline\D)\), and \(V_{c_1}\) is
the potential of its uniform law in the interior
(Lemma~\ref{LemEllipticPotential}).
The same identity at \(v=1\), \(a=0\), and the characteristic-polynomial
kernel compute the expected unshifted squared norm of \(q_N\).
Markov--Borel--Cantelli, Stirling's formula and the evaluation bound
of Lemma~\ref{LemEvaluationKernel} then give
\(h_N\leq V_{c_1}+\varepsilon_N\) almost surely, where
\(\varepsilon_N\to0\) is independent of \(z\).
This global bound is needed both for subharmonic compactness and to
control the Gaussian tails.
Monicity excludes collapse to \(-\infty\), so subsequential local
\(\mathrm L^1\) limits \(\psi\leq V_{c_1}\) exist.

For each \(u\) in the chosen countable dense subset of \(\D\),
the Gaussian shift \(a=(1-v)u\) gives, by quadratic completion,
\[
 2V_{c_1}\left(z\right)-Q_{v,c_1,\left(1-v\right)u}\left(z\right)
 =M\left(u\right)-B_{v,c_1}\left(z-L_{c_1}\left(u\right)\right),
\]
where \(B_{v,c_1}\) is positive definite. The transferred rate, so-called Hartogs'
lemma and the tail bound force \(\psi\) to attain \(V_{c_1}\) at the
unique maximizing point \(L_{c_1}(u)\). The dense family of shifts
therefore gives equality throughout the ellipse interior. Here, with \(Q=Q_{v,c_1,(1-v)u}\), the matching lower bound on the
shifted norm is essential: together with Hartogs' lemma and the tail
estimate, it forces \(\sup_{\mathbb{C}}(2\psi-Q)=M(u)\).
Since \(2\psi-Q\leq2V_{c_1}-Q\), whose unique maximum is \(M(u)\)
at \(L_{c_1}(u)\), this yields
\(\psi(L_{c_1}(u))=V_{c_1}(L_{c_1}(u))\).
Since \(\nu_{q_N}=(2\pi)^{-1}\Delta h_N\), the limiting zero measure
has the uniform ellipse density there. Its total mass is at most one,
and this interior already carries mass one, proving
Proposition~\ref{PropNormalisedPlanar}.
The target matrix's elliptic law is used separately to obtain the
matching matrix limit in Theorem~\ref{ThmPlanar} completing the proof.

\textbf{The boundary case.}
After rotation and translation, the target is
\(\mathbf B_N=\mathbf A_N+\mathbf H_N\), with \(\mathbf A_N\) a uniformly
bounded deterministic Hermitian source with limiting law \(\mu_0\),
and \(\mathbf H_N\) GUE of variance \(s\).
The initial polynomial \(p_N\) is the characteristic polynomial of
\(\mathbf A_N+\mathbf Z_N(s,c_0)\), where \(|c_0|\leq s\), and
\(q_N=\exp(-(s-c_0)\partial_z^2/(2N))p_N\).
Proposition~\ref{PropMatrixDeformation}, namely
\cite[Theorem~2.7, Equation~(2.11)]{HallHo}, gives
\[
 \E\left[\left|q_N\left(z\right)\right|^2\right]
 =\E\left[\left|\det\left(z\mathbf I-\mathbf B_N\right)\right|^2\right].
\]
The proof then uses information about this target ensemble only and in particular it requires
no limiting eigenvalue law for the initial nonnormal matrix.

Put \(\mu=\mu_0\boxplus\mathfrak{sc}_s\), the classical deformed-GUE
limit, and \(\overline\mu_N=\E[L_{\mathbf B_N}]\).
Lemma~\ref{LemDeformedGUE} gives that the Wasserstein distance satisfies
\(\mathbb W_1(\overline\mu_N,\mu)\to0\), compact support and a bounded
density for \(\mu\), hence continuity of
\(U_\mu(z)=\int_\R\log|z-x|\dd\mu(x)\).
These facts and Gaussian concentration yield the stronger estimate
in Lemma~\ref{LemDeterminantBound}:
\begin{equation}\label{EqDetBoundIntro}
 \E\left[\left|\det\left(x\mathbf I-\mathbf B_N\right)\right|^2\right]
 \leq\mathrm{e}^{2N\left[U_\mu\left(x\right)+o\left(1\right)\right]},
\end{equation}
uniformly on a compact interval \(K\) containing \(\supp\mu\).
To handle the logarithmic singularity, fix \(T>0\) and replace
\(\log|x-y|\) by \(\max\{\log|x-y|,-T\}\). Gaussian concentration
bounds the exponential moment of the resulting matrix statistic in
terms of its mean. Wasserstein convergence then compares this mean
with the corresponding integral against \(\mu\), while the bounded
density of \(\mu\) controls the error introduced by truncation.
More precisely, uniformly for \(x\in K\),
\[
\begin{aligned}
 &\frac1{2N}\log\E\left[\left|\det\left(x\mathbf I-\mathbf B_N\right)\right|^2\right]
       -U_\mu\left(x\right)\leq
 \mathrm{e}^T\mathbb W_1\left(\overline\mu_N,\mu\right)
 +2C_s\mathrm{e}^{-T}+\frac{s\mathrm{e}^{2T}}N,
\end{aligned}
\]
where \(C_s\) bounds the density of \(\mu\). For fixed \(T\), the
first and third terms vanish as \(N\to\infty\); subsequently letting
\(T\to\infty\) removes the remaining truncation error, giving \eqref{EqDetBoundIntro}.

The exact second-moment identity transfers this uniform estimate to
\(q_N\). Uniformity permits integration against the probability
arcsine measure \(\alpha_K\). Markov's inequality and Borel--Cantelli
then give
\[
 I_N\defeq\int_K\left|q_N\left(x\right)\right|^2
          \mathrm{e}^{-2NU_\mu\left(x\right)}\dd\alpha_K\left(x\right),
 \qquad
 \limsup_{N\to\infty}\frac1N\log I_N\leq0
 \quad\text{almost surely}.
\]
This is the hypothesis of Lemma~\ref{LemIntervalNorm}, which is a known
weighted extremality criterion from \cite{BloomWeighted,MhaskarSaff}.
For the weight \(\mathrm{e}^{-U_\mu}\), the corresponding equilibrium measure is
\(\mu\). The criterion allows complex zeros and yields
\(\nu_{q_N}\to\mu\), proving Proposition~\ref{PropHermitianEndpoint}.
The target matrix law is again invoked separately for the corresponding
assertion of Theorem~\ref{ThmBoundary} and then restoring the affine parameters
completes both theorems. In general, all almost-sure estimates use summable marginal
bounds, so independence across matrix sizes is unnecessary.

\begin{center}
\begin{minipage}{\linewidth}
\centering
\setlength{\unitlength}{1mm}
\begin{picture}(133,91)
\put(0,87){\makebox(64,4){\color{PlanarBlue}\small\textbf{Planar target}}}
\put(69,87){\makebox(64,4){\color{BoundaryTeal}\small\textbf{Boundary target}}}
\put(0,67){\color{PlanarBlue}\framebox(64,18){\parbox{60mm}{\centering\footnotesize
Initial elliptic law and kernel\\
\(\Downarrow\)\\Shifted norm rates for \(p_N\)\\
(Proposition~\ref{PropShiftedExponent})}}}
\put(69,67){\color{BoundaryTeal}\framebox(64,18){\parbox{60mm}{\centering\footnotesize
Target mean law, density and concentration\\
\(\Downarrow\)\\Uniform determinant bound\\
(Lemma~\ref{LemDeterminantBound})}}}
\put(32,64){\color{PlanarBlue}\makebox(0,0){\(\downarrow\)}}
\put(101,64){\color{BoundaryTeal}\makebox(0,0){\(\downarrow\)}}
\put(0,44){\color{PlanarBlue}\framebox(64,17){\parbox{60mm}{\centering\footnotesize
\textbf{Known pathwise identity}\\
\(\|q_N\|_{\gamma_1}^2=\|p_N\|_{\gamma_0}^2\)\\
Driver--Hall--Kemp (Proposition~\ref{PropNormTransfer})}}}
\put(69,44){\color{BoundaryTeal}\framebox(64,17){\parbox{60mm}{\centering\footnotesize
\textbf{Known second-moment identity}\\
\(\E[|q_N(z)|^2]=\E[|\det(z\mathbf I-\mathbf B_N)|^2]\)\\
Hall--Ho (Proposition~\ref{PropMatrixDeformation})}}}
\put(32,41){\color{PlanarBlue}\makebox(0,0){\(\downarrow\)}}
\put(101,41){\color{BoundaryTeal}\makebox(0,0){\(\downarrow\)}}
\put(0,23){\color{PlanarBlue}\framebox(64,15){\parbox{60mm}{\centering\footnotesize
Transferred norm rates for \(q_N\)\\
and global growth bound}}}
\put(69,23){\color{BoundaryTeal}\framebox(64,15){\parbox{60mm}{\centering\footnotesize
Expected weighted norm bound\\
\(\Downarrow\)\\
Almost-sure weighted norm bound}}}
\put(32,20){\color{PlanarBlue}\makebox(0,0){\(\downarrow\)}}
\put(101,20){\color{BoundaryTeal}\makebox(0,0){\(\downarrow\)}}
\put(0,1){\color{PlanarBlue}\framebox(64,16){\parbox{60mm}{\centering\footnotesize
Interior potential equality\\
\(\Downarrow\)\\Planar zero limit\\
(Proposition~\ref{PropNormalisedPlanar})}}}
\put(69,1){\color{BoundaryTeal}\framebox(64,16){\parbox{60mm}{\centering\footnotesize
Known weighted extremality\\
(Lemma~\ref{LemIntervalNorm})\\
\(\Downarrow\)\\Boundary zero limit (Proposition~\ref{PropHermitianEndpoint})}}}
\end{picture}

\par\smallskip
\refstepcounter{figure}\label{FigProofStrategy}
{\footnotesize Figure~\thefigure. From matrix estimates to heat-evolved zeros.
Here \(\gamma_j=\gamma_{N;v,c_j,a}\) and
\(\|P\|_{\gamma_j}\) denotes its \(\mathrm L^2(\gamma_j)\) norm.
The planar argument uses the initial matrix law; the boundary argument
uses the target matrix law. The exact identities transfer the stronger
estimates needed for the respective zero-convergence arguments.}
\end{minipage}
\end{center}

\paragraph{AI disclosure} In the course of the research presented here I have been using AI tools (mainly ChatGPT 5.5, 5.6 Sol, 6) for ideation, technical help, including the simulations, editing and checking the proofs and general editing and proofreading of the manuscript at the level of a co-author. All mathematical arguments have been carefully verified. Any remaining mistakes are my own responsibility.

\section{Gaussian heat identities}\label{SectionIdentities}

We recall the Gaussian identities used to transfer polynomial norms
and second moments under the heat flow.  We then record the subharmonic
compactness principle that turns the resulting potential estimates
into convergence of zero measures in the planar proof.

\subsection{Gaussian models}\label{SectionGaussianRealization}

For \(S>0\) and \(|c|\leq S\), we use the Gaussian matrix
\(\mathbf Z_N(S,c)\) defined by \eqref{EqHallCovariance}, with \(s=S\).
Writing \(c=|c|\mathrm{e}^{2\mathrm{i}\theta}\), with \(\theta=0\) if
\(c=0\), gives the realisation
\begin{equation}\label{EqGaussianRealisation}
 \mathbf{Z}_N\left(S,c\right)\overset{\mathrm{d}}{=}
 \mathrm{e}^{\mathrm{i}\theta}
 \left(\sqrt{\frac{S+\left|c\right|}{2}}\mathbf{H}_{1,N}
       +\mathrm{i}\sqrt{\frac{S-\left|c\right|}{2}}\mathbf{H}_{2,N}\right),
\end{equation}
where the two matrices are independent GUE matrices normalised by
\(\E[(\mathbf{H}_{r,N})_{ij}(\mathbf{H}_{r,N})_{kl}]
=N^{-1}\delta_{il}\delta_{jk}\).
In particular, \(c=S\) gives GUE of variance \(S\), while \(c=0\)
gives complex Ginibre with entry variance \(S/N\).

\subsection{Shifted scalar norms}\label{SectionScalarNorms}

We choose scalar Gaussian weights whose holomorphic covariance matches
the heat parameter.  For \(v>|c|\) and \(a\in\C\), define
\begin{equation}\label{EqComplexQuadratic}
 Q_{v,c,a}\left(z\right)\defeq
 \frac{v\left|z-a\right|^2-\Re\!\left(\overline{c}\left(z-a\right)^2\right)}
      {v^2-\left|c\right|^2},
\end{equation}
and the Gaussian probability measure
\begin{equation}\label{EqScalarGaussian}
 \dd\gamma_{N;v,c,a}\left(z\right)
 \defeq\frac{N}{\pi\sqrt{v^2-\left|c\right|^2}}
   \exp\!\left(-NQ_{v,c,a}\left(z\right)\right)\dd A\left(z\right).
\end{equation}
If \(\mathsf{Z}\) has this law and \(\mathsf{W}=\mathsf{Z}-a\), then
\begin{equation}\label{EqScalarMoments}
 \E\left[\mathsf{W}^2\right]=\frac{c}{N},
 \qquad \E\left[\left|\mathsf{W}\right|^2\right]=\frac{v}{N}.
\end{equation}
The monic heat polynomials
\begin{equation}\label{EqHeatPolynomials}
 h_{k,c,N}\left(z\right)\defeq\exp\!\left(-\frac{c}{2N}\partial_z^2\right)z^k
\end{equation}
are scaled Hermite polynomials.  Their orthogonality follows from
\cite[Equation~(4.6)]{AkemannVernizzi}.  After translation by \(a\),
rotation by \(-\theta\), and scaling by \(\sqrt{N/v}\), the formula is
\begin{equation}\label{EqComplexOrthogonality}
 \int_{\C}h_{k,c,N}\left(z-a\right)\overline{h_{\ell,c,N}\left(z-a\right)}
       \dd\gamma_{N;v,c,a}\left(z\right)
 =\delta_{k\ell}k!\left(\frac{v}{N}\right)^k.
\end{equation}
Here \(c=|c|\mathrm{e}^{2\mathrm{i}\theta}\), and the ellipticity
parameter in that reference is \(|c|/v\); the case \(c=0\) is the
monomial orthogonality for the circular Gaussian measure.

The next identity transfers the Gaussian norms used in the planar
proof.  It is the polynomial, one-dimensional case of complex-time
Segal--Bargmann unitarity, composed at two times and translated by \(a\).
\begin{prop}[{\cite[Theorem~1.6 and Section~1.4]{DriverHallKemp}}]
\label{PropNormTransfer}
Let \(N\geq1\), \(c_0,c_1,a\in\C\), and
\(v>\max(|c_0|,|c_1|)\).  For every polynomial \(p\), set
\(q=\exp(-(c_1-c_0)\partial_z^2/(2N))p\).  Then
\begin{equation}\label{EqNormTransfer}
 \int_{\C}\left|q\left(z\right)\right|^2\dd\gamma_{N;v,c_1,a}\left(z\right)
 =\int_{\C}\left|p\left(z\right)\right|^2\dd\gamma_{N;v,c_0,a}\left(z\right).
\end{equation}
\end{prop}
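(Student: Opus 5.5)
The identity is linear-algebraic, so I will prove it by expanding both polynomials in suitable orthogonal bases and comparing coefficients. Both sides are quadratic forms in the coefficients of $p$, so it suffices to exhibit orthogonal bases for the two $\mathrm L^2$ spaces that the heat operator carries onto each other with matching norms.

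\textbf{Step 1: the basis for $p$.} Expand $p$ in the shifted heat polynomials adapted to the initial measure:
\[
 p(z)=\sum_{k=0}^{n}\alpha_k\,h_{k,c_0,N}(z-a),
\]
where $n=\deg p$. This is possible because $h_{k,c_0,N}$ is monic of degree $k$, so the family is triangular with respect to $\{(z-a)^k\}$. By the orthogonality relation \eqref{EqComplexOrthogonality} with $c=c_0$, which is valid since $v>|c_0|$,
\[
 \int_{\C}|p|^2\,\dd\gamma_{N;v,c_0,a}=\sum_k|\alpha_k|^2k!\left(\frac vN\right)^k.
\]

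\textbf{Step 2: the heat operator on this basis.} The operator $\exp(t\partial_z^2/(2N))$, defined by the terminating series \eqref{EqPolynomialHeatFlowDefinition}, commutes with translation $z\mapsto z-a$. On polynomials it also satisfies the semigroup law
\[
 \exp\!\left(\frac{t_1}{2N}\partial_z^2\right)\exp\!\left(\frac{t_2}{2N}\partial_z^2\right)=\exp\!\left(\frac{t_1+t_2}{2N}\partial_z^2\right).
\]
This holds because, on the finite-dimensional space of polynomials of degree at most $n$, $\partial_z^2$ is nilpotent, so these are genuine exponentials of commuting nilpotent operators and the Cauchy product of the terminating series applies. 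Taking $t_1=-(c_1-c_0)$ and $t_2=-c_0$ gives
\[
 \exp\!\left(-\frac{c_1-c_0}{2N}\partial_z^2\right)h_{k,c_0,N}(\cdot-a)=h_{k,c_1,N}(\cdot-a),
\]
and therefore
\[
 q(z)=\sum_k\alpha_k\,h_{k,c_1,N}(z-a).
\]

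\textbf{Step 3: comparison.} Applying \eqref{EqComplexOrthogonality} now with $c=c_1$, valid since $v>|c_1|$, gives the same sum $\sum_k|\alpha_k|^2k!(v/N)^k$ for the norm of $q$. This proves \eqref{EqNormTransfer}.

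The only point needing care is the orthogonality formula \eqref{EqComplexOrthogonality} itself, which the paper has already recorded from \cite{AkemannVernizzi}. Were it not available, I would verify it by computing the generating function
\[
 \sum_k h_{k,c,N}(w)\frac{x^k}{k!}=\exp\!\left(xw-\frac{c\,x^2}{2N}\right).
\]
Then I would compute the Gaussian integral of $\exp(xW-cx^2/(2N))\,\overline{\exp(yW-cy^2/(2N))}$ using the moments \eqref{EqScalarMoments}. Since $\E[W^2]=c/N$ and $\E[|W|^2]=v/N$, the exponential moment equals
\[
 \exp\!\left(\frac{cx^2}{2N}+\frac{\bar c\bar y^2}{2N}+\frac{vx\bar y}{N}\right),
\]
so after cancelling the compensating factors the result is $\exp(vx\bar y/N)$. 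Reading off the coefficient of $x^k\bar y^\ell/(k!\,\ell!)$ yields $\delta_{k\ell}k!(v/N)^k$.
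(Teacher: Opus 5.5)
Your proof is correct, but it takes a different route from the paper. The paper does not verify \eqref{EqNormTransfer} directly. It invokes the unitarity of the Driver--Hall--Kemp complex-time Segal--Bargmann transform $B_\tau$, using the dictionary $s=v/N$, $\tau_j=(v-c_j)/N$. This transform maps the real Gaussian $\mathrm L^2$ space of variance $s$ unitarily onto the holomorphic $\mathrm L^2$ space over $\gamma_{N;v,c_j,0}$, and acts on polynomials as $\exp(\tau\partial_z^2/2)$. The paper then composes $B_{\tau_1}B_{\tau_0}^{-1}$ and translates by $a$.

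You instead go directly between the two holomorphic spaces. The heat operator carries the basis $h_{k,c_0,N}(\cdot-a)$ onto $h_{k,c_1,N}(\cdot-a)$, by the nilpotent semigroup law and translation invariance. By \eqref{EqComplexOrthogonality}, both families are orthogonal with the same norms $k!(v/N)^k$, independent of $c$. This is exactly the polynomial content of the DHK unitarity, with the real intermediate space bypassed. Your argument is therefore more elementary and self-contained: it uses only the orthogonality relation the paper already records. Your generating-function computation giving $\exp(vx\bar y/N)$ even derives that relation from the moments \eqref{EqScalarMoments}. The paper's citation buys brevity and places the identity in a framework valid on the whole holomorphic $\mathrm L^2$ space, which is not needed here.

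The one step you leave implicit is the exchange of sum and integral in the generating-function check, and it is routine. Termwise, $|h_{k,c,N}(w)|$ is dominated by the $x^k/k!$ coefficient of $\exp(|x||w|+|c||x|^2/(2N))$. That majorant grows only like $\mathrm{e}^{C|w|}$, so it is integrable against the nondegenerate Gaussian $\gamma_{N;v,c,a}$ when $v>|c|$.
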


To match the parameters of \cite{DriverHallKemp}, take
\(s=v/N\) and \(\tau_j=(v-c_j)/N\).  Their admissibility condition
\(|\tau_j-s|<s\) is \(|c_j|<v\), and their range measure is
\(\gamma_{N;v,c_j,0}\).  On polynomials their transform is
\(B_{\tau_j}=\exp(\tau_j\partial_z^2/2)\), so
\(B_{\tau_1}B_{\tau_0}^{-1}=\exp(-(c_1-c_0)\partial_z^2/(2N))\);
applying this to \(p(a+\cdot)\) gives \eqref{EqNormTransfer}.

\subsection{Matrix covariance deformation}\label{SectionMatrixIdentity}

The boundary case proof uses Hall--Ho's second-moment deformation to transfer
a determinant estimate to the heat-evolved polynomial.  Their theorem
already includes the degenerate covariances on \(|c|=S\).
\begin{prop}[{\cite[Theorem~2.7, Equation~(2.11)]{HallHo}}]
\label{PropMatrixDeformation}
Let \(N\geq1\), \(S>0\), \(|c_0|,|c_1|\leq S\), and let
\(\mathbf{A}\) be a deterministic complex \(N\times N\) matrix.  Put
\[
 p_j\left(z\right)\defeq\det\left(z\mathbf{I}-\mathbf{A}-\mathbf{Z}_N\left(S,c_j\right)\right),
 \qquad
 q\defeq\exp\!\left(-\frac{c_1-c_0}{2N}\partial_z^2\right)p_0.
\]
For every \(z\in\C\),
\begin{equation}\label{EqMatrixDeformation}
 \E\left[\left|q\left(z\right)\right|^2\right]
 =\E\left[\left|p_1\left(z\right)\right|^2\right].
\end{equation}
\end{prop}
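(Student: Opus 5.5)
The plan is to prove the identity as an equality of polynomials in \((z,\overline w)\) by studying how the mixed second moment depends on the pseudo-covariance \(c\) with \(S\) held fixed. For \(|c|\leq S\) and \(z,w\in\C\), set
\[
 F_c\left(z,w\right)\defeq\E\left[\det\left(z\mathbf I-\mathbf A-\mathbf Z_N\left(S,c\right)\right)\overline{\det\left(w\mathbf I-\mathbf A-\mathbf Z_N\left(S,c\right)\right)}\right].
\]
This is a polynomial in \(z\) and \(\overline w\). By Wick's formula it is also a polynomial in \(c\) and \(\overline c\), since the covariances in \eqref{EqHallCovariance} are affine in \(c,\overline c\). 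Because \(\exp(-(c_1-c_0)\partial_z^2/(2N))\) is a terminating series, it commutes with expectation. Hence
\[
 \E\left[q\left(z\right)\overline{q\left(w\right)}\right]=\exp\!\left(-\frac{c_1-c_0}{2N}\partial_z^2\right)\exp\!\left(-\frac{\overline{c_1-c_0}}{2N}\partial_{\overline w}^2\right)F_{c_0}\left(z,w\right),
\]
and \eqref{EqMatrixDeformation} follows by setting \(w=z\), once we show that the right-hand side equals \(F_{c_1}\).

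\textbf{Step 1: the Wirtinger derivatives of \(F_c\) in \(c\).} Treat \(f(\mathbf Z)=\det(z-\mathbf A-\mathbf Z)\,\overline{\det(w-\mathbf A-\mathbf Z)}\) as a polynomial in the independent variables \(Z_{ij}\) and \(\overline{Z_{ij}}\). For a centred Gaussian vector, the expectation of a polynomial is a polynomial in the covariances, and Wick's formula gives
\[
 \partial_c\E f=\frac1{2N}\sum_{i,j}\E\left[\partial_{Z_{ij}}\partial_{Z_{ji}}f\right].
\]
This holds because only the holomorphic pairing \(\E[Z_{ij}Z_{kl}]=c\delta_{il}\delta_{jk}/N\) depends on \(c\), while the Hermitian pairing is fixed at \(S/N\). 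Since this is an identity of polynomials in \((c,\overline c)\), no nondegeneracy is needed and the boundary \(|c|=S\) is included. The antiholomorphic factor does not depend on the \(Z_{ij}\), so only the first determinant is differentiated.

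Write \(\mathbf M=z-\mathbf A-\mathbf Z\), working first where \(\mathbf M\) is invertible and extending by polynomial identity. Using
\[
 \partial_{Z_{ij}}\det\mathbf M=-\det\mathbf M\,(\mathbf M^{-1})_{ji},\qquad \partial_{Z_{ji}}(\mathbf M^{-1})_{ji}=(\mathbf M^{-1})_{jj}(\mathbf M^{-1})_{ii},
\]
one finds
\[
 \sum_{i,j}\partial_{Z_{ij}}\partial_{Z_{ji}}\det\mathbf M=\det\mathbf M\left[\tr\mathbf M^{-2}-(\tr\mathbf M^{-1})^2\right].
\]
Here \(\tr\) denotes the unnormalised trace. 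Comparing with
\[
 \partial_z^2\det\mathbf M=\det\mathbf M\left[(\tr\mathbf M^{-1})^2-\tr\mathbf M^{-2}\right]
\]
gives
\[
 \partial_cF_c=-\frac1{2N}\partial_z^2F_c,\qquad \partial_{\overline c}F_c=-\frac1{2N}\partial_{\overline w}^2F_c,
\]
where the second equation follows by conjugate symmetry.

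\textbf{Step 2: integrate along a segment.} Parametrise the segment \(c(t)=c_0+t(c_1-c_0)\), \(t\in[0,1]\), which stays in the closed disk \(|c|\leq S\) by convexity. Define
\[
 G(t)=\exp\!\left(-\frac{(1-t)(c_1-c_0)}{2N}\partial_z^2\right)\exp\!\left(-\frac{(1-t)\overline{(c_1-c_0)}}{2N}\partial_{\overline w}^2\right)F_{c(t)}.
\]
All operators are nilpotent on polynomials of bounded degree and commute with each other. Differentiating and using Step 1 then shows \(G'(t)=0\), so \(G(0)=G(1)=F_{c_1}\), which is the claim.

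The main obstacle is Step 1. It requires getting the Gaussian integration-by-parts formula right for the complex pseudo-covariance derivative, including the \(1/2\) factor and the index pairing \(\delta_{il}\delta_{jk}\) that produces \(Z_{ij}Z_{ji}\). It also requires justifying it on the closed disk; there I would rely on the Wick polynomial expression rather than on densities. The determinant calculus and the integration in Step 2 are routine.
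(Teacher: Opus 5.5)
Your proof is correct, but it takes a different route from the paper. The paper gives no argument of its own here. It imports the identity from Hall--Ho, adding only the dictionary $s=S$, $\tau_j=S-c_j$ and the remark that the degenerate covariances $|c_j|=S$ are already covered there.

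You derive the identity directly, in three steps:
\begin{enumerate}
\item Wick's formula makes $F_c(z,w)=\E\bigl[p(z)\overline{p(w)}\bigr]$ a polynomial in $(c,\overline c)$, with $\partial_cF_c=(2N)^{-1}\sum_{i,j}\E[\partial_{Z_{ij}}\partial_{Z_{ji}}f]$, because only the holomorphic pairing depends on $c$.
\item The determinant identity $\sum_{i,j}\partial_{Z_{ij}}\partial_{Z_{ji}}\det\mathbf M=-\partial_z^2\det\mathbf M$ turns this into the backward heat equations $\partial_cF_c=-(2N)^{-1}\partial_z^2F_c$ and $\partial_{\overline c}F_c=-(2N)^{-1}\partial_{\overline w}^2F_c$.
\item Integrating along the segment from $c_0$ to $c_1$ closes the argument.
\end{enumerate}
The factor $1/(2N)$, the pairing $Z_{ij}\leftrightarrow Z_{ji}$ coming from $\delta_{il}\delta_{jk}$, the resolvent calculus, the Wirtinger chain rule along $c(t)$, and the vanishing of $G'(t)$ via commuting nilpotent operators are all correct. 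As a sanity check, the first-moment version of your equation gives $\E\det(z-\mathbf A-\mathbf Z_N(S,c))=\exp(-\tfrac{c}{2N}\partial_z^2)\det(z-\mathbf A)$, which is the known Hermite formula at $\mathbf A=0$, $c=S$.

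What your route buys:
\begin{itemize}
\item It is elementary and self-contained.
\item It proves the stronger two-point identity $\E[q(z)\overline{q(w)}]=\E[p_1(z)\overline{p_1(w)}]$.
\item It handles the boundary $|c|=S$ for free, since Isserlis' formula needs no density or nondegeneracy.
\item It avoids translating between conventions ($c=s-\tau$). That matters here, because the direction of the heat operator is exactly what the proposition asserts.
\end{itemize}
What the citation buys is brevity, and it places the identity in the complex-time Segal--Bargmann framework. Your computation is in the same spirit: it is essentially a generator-level, matrix version of the unitarity mechanism behind the scalar norm transfer in Proposition~\ref{PropNormTransfer}.
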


The parameter correspondence is \(s=S\) and \(\tau_j=S-c_j\), so that
\(|\tau_j-s|\leq s\) is exactly \(|c_j|\leq S\).

\subsection{Subharmonic compactness}\label{SectionCompactness}

For a compactly supported finite positive measure \(\rho\) on \(\C\),
write
\[
 U_\rho\left(z\right)\defeq\int_{\C}\log\left|z-w\right|\dd\rho\left(w\right).
\]
The potential is locally integrable, possibly taking the value
\(-\infty\), and \(\Delta U_\rho=2\pi\rho\) in distributions, with
\(\Delta=\partial_x^2+\partial_y^2\).  In particular, a monic polynomial
\(P_N\) of degree \(N\) satisfies
\begin{equation}\label{EqPolynomialLaplacian}
 u_N\defeq\frac1N\log\left|P_N\right|=U_{\nu_{P_N}},
 \qquad \frac1{2\pi}\Delta u_N=\nu_{P_N}.
\end{equation}

The following standard principle (Hartogs' lemma) supplies convergent subsequences of
potentials in the planar proof and transfers upper bounds to their
limits \cite[Theorem~1.6.4]{VuPluripotential}.
\begin{lem}\label{LemSubharmonicCompactness}
Let \(\Omega\subset\C\) be a domain, and let \((u_n)\) be subharmonic
functions locally uniformly bounded above.  Either \(u_n\to-\infty\)
uniformly on compact subsets, or a subsequence converges in
\(\mathrm{L}^1_{\mathrm{loc}}(\Omega)\) to a subharmonic function.
If \(u_n\to u\) in \(\mathrm{L}^1_{\mathrm{loc}}(\Omega)\), then for every
compact \(K\subset\Omega\) and continuous real-valued \(f\) on \(K\),
\begin{equation}\label{EqHartogs}
 \limsup_{n\to\infty}\sup_K\left(u_n-f\right)\leq\sup_K\left(u-f\right).
\end{equation}
\end{lem}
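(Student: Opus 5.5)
This is a standard result, Hartogs' lemma, which the paper cites from \cite[Theorem~1.6.4]{VuPluripotential}. I would prove it in two stages: first the compactness dichotomy, then the upper-semicontinuity estimate \eqref{EqHartogs}. Both rest on the sub-mean value property, combined with the local upper bound.

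\textbf{Step 1: the dichotomy.} Fix a disk \(\overline{\mathscr D(z_0,2r)}\subset\Omega\) with \(u_n\leq C\) on it. Apply the sub-mean value inequality to the nonpositive functions \(u_n-C\) on disks \(\mathscr D(z,r)\) with \(z\in\mathscr D(z_0,r)\). This gives the following: if \(\int_{\mathscr D(z_0,r)}u_n\,\dd A\) stays bounded below along a subsequence, then the \(\mathrm L^1\) norms on a smaller disk are bounded along it. Otherwise \(\sup_{\mathscr D(z_0,r/2)}u_n\to-\infty\), because each point value is at most the average over a disk contained in \(\mathscr D(z_0,2r)\) containing a fixed disk.

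Next, the set of \(z_0\) where the first alternative occurs is open and closed in \(\Omega\), by an overlapping-disk argument. Connectedness of \(\Omega\) then yields the dichotomy. In the bounded case, \(\Delta u_n\geq0\) with locally bounded mass, since the mass is controlled by pairing with a cutoff and using the \(\mathrm L^1\) bound. The Riesz decomposition \(u_n=U_{\rho_n}+\text{harmonic}\) then gives local \(\mathrm L^1\) precompactness: the logarithmic potentials of measures with bounded mass are compact in \(\mathrm L^1_{\mathrm{loc}}\), and the harmonic parts are controlled by mean values. A diagonal argument over an exhaustion by compacts gives a subsequence converging in \(\mathrm L^1_{\mathrm{loc}}\). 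The limit is a.e. equal to a subharmonic function, namely its upper-regularized mean-value limit \(\lim_{r\to0}\) of disk averages.

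\textbf{Step 2: the estimate \eqref{EqHartogs}.} This is the step I expect to be the main obstacle, since \(\mathrm L^1\) convergence gives no pointwise control and the argument must exploit the mean value property carefully. Let \(\varepsilon>0\). By uniform continuity of \(f\), extended continuously to a neighbourhood, choose \(\delta\) such that \(|f(w)-f(z)|<\varepsilon\) for \(|w-z|<\delta\), with \(K_\delta=\{z:\mathrm{dist}(z,K)\leq\delta\}\subset\Omega\). Since \(u\) is upper semicontinuous and \(f\) continuous, we can choose \(r<\delta\) such that
\[
 \frac1{\pi r^2}\int_{\mathscr D(z,r)}u\,\dd A\leq \sup_K(u-f)+f(z)+2\varepsilon
 \quad\text{for all } z\in K.
\]
This uses that the disk averages of \(u\) decrease to \(u\) as \(r\downarrow0\), together with a compactness/Dini-type argument on \(K\). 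Applying it to the upper semicontinuous function \(u-f\) plus a continuity error gives the uniform-in-\(z\) inequality above.

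Then, for \(z\in K\),
\[
 u_n(z)\leq \frac1{\pi r^2}\int_{\mathscr D(z,r)}u_n\,\dd A
 \leq \frac1{\pi r^2}\int_{\mathscr D(z,r)}u\,\dd A+\frac{\|u_n-u\|_{\mathrm L^1(K_\delta)}}{\pi r^2}.
\]
The last term tends to \(0\) uniformly in \(z\). Hence \(\limsup_n\sup_K(u_n-f)\leq\sup_K(u-f)+2\varepsilon\), and letting \(\varepsilon\to0\) concludes. The delicate point is the uniformity of the disk-average approximation over \(K\). I would handle it by noting that \(z\mapsto\) (average of \(u\) over \(\mathscr D(z,r)\)) is continuous and decreases pointwise to the upper semicontinuous \(u\). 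A standard compactness argument then gives \(\sup_K(\text{avg}_r u-f)\downarrow\sup_K(u-f)\) as \(r\downarrow0\); this is the Dini-type lemma for decreasing continuous functions converging to an upper semicontinuous limit.
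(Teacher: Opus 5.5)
The paper does not prove this lemma; it quotes Hartogs' lemma from \cite[Theorem~1.6.4]{VuPluripotential}. Your proposal is essentially the standard textbook proof of that cited result, and it is correct. You use the sub-mean value inequality and connectedness for the dichotomy, Riesz decomposition for $\mathrm L^1_{\mathrm{loc}}$ compactness, and disk averages with a Dini argument for \eqref{EqHartogs}. Compared with the citation, it buys self-containment. It also shows that \eqref{EqHartogs} uses only the sub-mean value inequality for the $u_n$ and the monotone convergence of the disk averages $A_ru(z)=(\pi r^2)^{-1}\int_{\mathscr D(z,r)}u\,\dd A$ of the limit.

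Two points deserve tightening.

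In Step~1, your alternative at $z_0$ involves a subsequence that depends on $z_0$, so connectedness alone does not yet give one common subsequence. Instead, suppose $u_n\not\to-\infty$ locally uniformly, and fix $(n_k)$ along which $\int_{\mathscr D(z_0,r)}u_{n_k}\,\dd A$ is bounded below at a single point $z_0$. Suppose a further subsequence diverged to $-\infty$ on some disk. Your open--closed argument, applied to that further subsequence, would make it diverge uniformly on $\overline{\mathscr D(z_0,r)}$, a contradiction. Hence $(u_{n_k})$ itself is locally $\mathrm L^1$-bounded on all of $\Omega$.

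In Step~2, $u$ must be the subharmonic representative of its $\mathrm L^1_{\mathrm{loc}}$ class; for other representatives \eqref{EqHartogs} can fail at a single point. Also, if $\sup_K(u-f)=-\infty$, replace $\sup_K(u-f)+2\varepsilon$ by an arbitrary real $\alpha>\sup_K(u-f)$. The Dini argument on the closed sets $\{z\in K:A_ru(z)-f(z)\geq\alpha\}$ handles both cases. It also makes the continuous extension of $f$ to a neighbourhood unnecessary.
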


For monic polynomials the disk-mean identity gives
\begin{equation}\label{EqMonicDiskMean}
 \frac1{\pi R^2}\int_{\mathscr{D}\left(0,R\right)}\frac1N\log\left|P_N\left(z\right)\right|\dd A\left(z\right)
 \geq\log R-\frac12,\qquad R>0.
\end{equation}
Indeed, the disk average of \(\log|z-w|\) is
\(\log R-\tfrac12+|w|^2/(2R^2)\) for \(|w|\leq R\), and
\(\log|w|\) otherwise.  This excludes the \(-\infty\) alternative for
every subsequence of a locally upper-bounded family
\((N^{-1}\log|P_N|)\) on \(\C\).
Along an \(\mathrm{L}^1_{\mathrm{loc}}\)-convergent subsequence with limit \(u\),
the zero measures converge distributionally, and hence vaguely, to
\((2\pi)^{-1}\Delta u\), whose mass is at most one.  When this limiting
measure has mass one, the convergence is weak.

\section{Fixed-parameter elliptic estimates}
\label{SectionPlanarPreliminaries}

We first work at mixed covariance one.  Fix \(c\in\C\) with \(|c|<1\),
and write
\[
 \begin{gathered}
 \mathbf{X}_{N,c}\defeq\mathbf{Z}_N\left(1,c\right),\qquad
 p_{N,c}\left(z\right)\defeq\det\left(z\mathbf{I}-\mathbf{X}_{N,c}\right),\\
 \mathscr{E}_c\defeq\mathscr{E}_{1,c},\qquad \mu_c\defeq\mu_{1,c}.
 \end{gathered}
\]
We recall the classical elliptic eigenvalue kernel and logarithmic
potential, and explain how the elliptic law holds for arbitrary
couplings across \(N\).

\subsection{The elliptic eigenvalue process}
\label{SectionEllipticProcess}

The orthogonal-polynomial representation of the elliptic eigenvalue
process supplies the concentration used below and the kernel underlying
the characteristic-polynomial second moment in
Lemma~\ref{LemEvaluationKernel}.
\begin{lem}[{\cite[Equations~(2.4), (2.9), (3.12), and (4.5)--(4.7)]{AkemannVernizzi}}]
\label{LemEllipticDPP}
Let \(\Delta(\lambda)=\prod_{i<j}(\lambda_j-\lambda_i)\).  The eigenvalues
of \(\mathbf{X}_{N,c}\), in uniformly random order, have joint density
\begin{equation}\label{EqEigenvalueDensity}
 \frac{1}{\mathcal{Z}_{N,c}}\left|\Delta\left(\lambda\right)\right|^2
 \prod_{j=1}^N\dd\gamma_{N;1,c,0}\left(\lambda_j\right),
 \qquad
 \mathcal{Z}_{N,c}\defeq N!\prod_{k=0}^{N-1}\frac{k!}{N^k}.
\end{equation}
Relative to \(\gamma_{N;1,c,0}\), they form a determinantal point process
with kernel
\begin{equation}\label{EqEllipticKernel}
 K_{N,c}\left(z,w\right)\defeq\sum_{k=0}^{N-1}
 \frac{h_{k,c,N}\left(z\right)\overline{h_{k,c,N}\left(w\right)}}{\kappa_{k,N}},
 \qquad \kappa_{k,N}\defeq\frac{k!}{N^k}.
\end{equation}
This kernel is the orthogonal projection in
\(\mathrm{L}^2(\gamma_{N;1,c,0})\) onto the polynomials of degree less
than \(N\).
\end{lem}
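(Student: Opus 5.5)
The plan is to derive the joint law from the Schur decomposition applied to the explicit entry density of \(\mathbf X_{N,c}\), and then read off the determinantal structure from the orthogonality relation \eqref{EqComplexOrthogonality}.

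\textbf{Step 1: entry density.} Since \(|c|<1\), the real Gaussian vector of entries of \(\mathbf X_{N,c}\) is nondegenerate. I will compute its density from the realisation \eqref{EqGaussianRealisation}, or directly from the covariances \eqref{EqHallCovariance} with \(s=1\). The \(2\times2\) block coupling \(X_{ij}\) and \(X_{ji}\) is inverted exactly as in the scalar case \eqref{EqScalarMoments}. The target form is
\[
 \propto\exp\!\Bigl(-\frac{N}{1-|c|^2}\bigl[\operatorname{Tr}(\mathbf X\mathbf X^*)-\Re\bigl(\overline c\operatorname{Tr}\mathbf X^2\bigr)\bigr]\Bigr)\,\dd\mathbf X .
\]
This matches \(Q_{1,c,0}\) of \eqref{EqComplexQuadratic}. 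I will check it by confirming that the first and second moments reproduce \eqref{EqHallCovariance}.

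\textbf{Step 2: Schur decomposition.} Write \(\mathbf X=\mathbf U(\Lambda+\mathbf T)\mathbf U^*\), with \(\mathbf U\) unitary modulo its diagonal torus, \(\Lambda=\operatorname{diag}(\lambda_j)\) and \(\mathbf T\) strictly upper triangular. The classical Jacobian of this change of variables is \(|\Delta(\lambda)|^2\). Both traces in the exponent are unitarily invariant, and
\[
 \operatorname{Tr}\mathbf X\mathbf X^*=\sum_j|\lambda_j|^2+\sum_{i<j}|T_{ij}|^2,
 \qquad \operatorname{Tr}\mathbf X^2=\sum_j\lambda_j^2 .
\]
Hence the \(\mathbf T\)-variables separate as independent circular Gaussians, and \(\mathbf U\) contributes only a constant. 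Integrating both out leaves \(|\Delta(\lambda)|^2\prod_j \exp(-NQ_{1,c,0}(\lambda_j))\,\dd A(\lambda_j)\). Symmetrising over orderings gives the form \eqref{EqEigenvalueDensity}, up to the constant. This step is the main obstacle. The Jacobian itself is standard (Ginibre), but the care goes into checking that the non-unitarily-invariant part \(\Re(\overline c\operatorname{Tr}\mathbf X^2)\) depends only on the \(\lambda_j\), which is exactly what makes the decoupling work.

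\textbf{Step 3: normalisation and kernel.} The heat polynomials \(h_{k,c,N}\) are monic of degree \(k\), so column operations give \(\Delta(\lambda)=\det[h_{k-1,c,N}(\lambda_j)]_{j,k=1}^N\). Andréief's identity together with \eqref{EqComplexOrthogonality} at \(v=1\), \(a=0\) then yields
\[
 \int|\Delta|^2\prod_j\dd\gamma_{N;1,c,0}(\lambda_j)=N!\prod_{k=0}^{N-1}\kappa_{k,N}=\mathcal Z_{N,c}.
\]
The same orthogonality shows that \(\{h_{k,c,N}/\sqrt{\kappa_{k,N}}\}_{k<N}\) is an orthonormal basis of the polynomials of degree less than \(N\) in \(\mathrm L^2(\gamma_{N;1,c,0})\). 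Consequently \(K_{N,c}\) is the corresponding orthogonal projection kernel. Writing \(|\Delta|^2/\mathcal Z_{N,c}=\det[K_{N,c}(\lambda_i,\lambda_j)]/N!\) via \(\det(AA^*)\) then gives the determinantal structure by the standard Gaudin integration lemma for projection kernels.
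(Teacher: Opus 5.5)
Your proposal is correct, but it takes a different route from the paper. The paper gives no derivation: it cites Akemann--Vernizzi and supplies only the dictionary \(c=r\mathrm{e}^{2\mathrm{i}\theta}\), \(\xi=\sqrt N\mathrm{e}^{-\mathrm{i}\theta}z\). It adds the renormalisation of the weight to a probability measure and of the monic polynomials, which produces \(\kappa_{k,N}=k!/N^k\).

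You instead derive the lemma from first principles, using three ingredients:
\begin{enumerate}
\item The entry density. Your off-diagonal block does split via \((X_{ij}\pm X_{ji})/\sqrt2\) into independent scalar Gaussians with parameters \(\pm c\), which confirms the trace form.
\item The Ginibre Schur Jacobian.
\item The identity \(\operatorname{Tr}\mathbf X^2=\sum_j\lambda_j^2\). This is immediate, since it is a similarity invariant, rather than the main obstacle.
\end{enumerate}

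What your route buys is that the constants in the paper's normalisation are verified rather than transcribed. You never need the Jacobian constant, since Andr\'eief's identity and \eqref{EqComplexOrthogonality} force \(\mathcal Z_{N,c}=N!\prod_{k<N}\kappa_{k,N}\). The projection property and the determinantal structure then follow from orthonormality and Gaudin's lemma. The citation is shorter, but it leaves the convention-matching (the \(\sqrt N\) scaling, the rotation, both normalisations) to the reader.

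Your argument still relies on \eqref{EqComplexOrthogonality}, which the paper also takes from Akemann--Vernizzi. For full independence you can obtain it from Proposition~\ref{PropNormTransfer} with \(c_0=0\), \(c_1=c\), \(a=0\), \(v=1\). The heat operator is linear and sends \(z^k\) to \(h_{k,c,N}\). So by polarisation it carries \(\int_{\C}z^k\overline{z^\ell}\,\dd\gamma_{N;1,0,0}=\delta_{k\ell}k!/N^k\) to the corresponding inner products of the \(h_{k,c,N}\) in \(\mathrm L^2(\gamma_{N;1,c,0})\).
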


The correspondence with \cite{AkemannVernizzi} is
\(c=r\mathrm{e}^{2\mathrm{i}\theta}\) and
\(\xi=\sqrt N\mathrm{e}^{-\mathrm{i}\theta}z\), with their
ellipticity parameter equal to \(r\).  Normalising the weight to a
probability measure and rescaling the monic polynomials gives
\(\kappa_{k,N}=k!/N^k\); their circular Gaussian formulas cover \(r=0\).

Mehler's formula bounds every finite diagonal Hermite kernel by an
explicit Gaussian.  We record it in our normalisation for the
evaluation estimates in Section~\ref{SectionPlanarProof}.
\begin{lem}[{\cite[Equation~18.18.28]{DLMF}}]\label{LemComplexMehler}
For \(z\in\C\) and \(0\leq\ell\leq1\),
\begin{equation}\label{EqComplexMehler}
 \sum_{k=0}^{\infty}\ell^k\frac{N^k}{k!}\left|h_{k,c,N}\left(z\right)\right|^2
 =\frac{1}{\sqrt{1-\left|c\right|^2\ell^2}}
 \exp\!\left[
 N\ell\frac{\left|z\right|^2-\ell\Re\left(\overline{c}z^2\right)}{1-\left|c\right|^2\ell^2}
 \right].
\end{equation}
\end{lem}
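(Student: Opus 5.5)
The plan is to reduce \eqref{EqComplexMehler} to the classical Mehler formula for the probabilists' Hermite polynomials \(\mathrm{He}_k\):
\[
 \sum_{k=0}^\infty\frac{\rho^k}{k!}\mathrm{He}_k\left(x\right)\mathrm{He}_k\left(y\right)
 =\frac1{\sqrt{1-\rho^2}}\exp\!\left[\frac{\rho xy-\tfrac12\rho^2\left(x^2+y^2\right)}{1-\rho^2}\right],
 \qquad\left|\rho\right|<1 .
\]
It holds for all complex \(x,y\), which is the form of \cite[Equation~18.18.28]{DLMF} after passing from \(H_n\) to \(\mathrm{He}_n\).

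The first step identifies \(h_{k,c,N}\) as a scaled Hermite polynomial. Since \(\exp(-\tfrac12\partial_x^2)x^k=\mathrm{He}_k(x)\), rescaling \(x=z/\sigma\) gives
\[
 \exp(-\tfrac{\sigma^2}{2}\partial_z^2)z^k=\sigma^k\mathrm{He}_k(z/\sigma).
\]
Taking \(\sigma^2=c/N\), we get \(h_{k,c,N}(z)=\sigma^k\mathrm{He}_k(z/\sigma)\) for \(c\neq0\). By the parity \(\mathrm{He}_k(-x)=(-1)^k\mathrm{He}_k(x)\), this expression does not depend on the choice of square root. Hence
\[
 \frac{N^k}{k!}\ell^k\left|h_{k,c,N}(z)\right|^2=\frac{(\ell\left|c\right|)^k}{k!}\mathrm{He}_k(x)\mathrm{He}_k(y),
 \qquad x=z/\sigma,\quad y=\overline{x}.
\]
Here we use \(\overline{\mathrm{He}_k(x)}=\mathrm{He}_k(\overline x)\), since the coefficients are real.

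The second step applies Mehler with \(\rho=\ell|c|\) and checks the exponent. Because \(xy=N|z|^2/|c|\), we have \(\rho xy=N\ell|z|^2\). Because \(x^2=Nz^2/c\), \(y^2=N\overline z^2/\overline c\) and \(|c|^2/c=\overline c\), we have \(\tfrac12\rho^2(x^2+y^2)=N\ell^2\Re(\overline c z^2)\). Dividing by \(1-\rho^2=1-|c|^2\ell^2\) gives exactly the right-hand side of \eqref{EqComplexMehler}. The case \(c=0\) is checked separately: there \(h_{k,0,N}=z^k\), and the sum is \(\sum_k(N\ell|z|^2)^k/k!=\mathrm e^{N\ell|z|^2}\), matching the formula.

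The only point needing care is the range of validity. Mehler's series converges absolutely for \(|\rho|<1\) and all complex \(x,y\). This follows from the bound \(|\mathrm{He}_k(x)|\leq\sqrt{k!}\,\mathrm e^{|x|^2/2+\ldots}\)-type estimates, or from analyticity in \((x,y)\) combined with the real case and the identity theorem. In our setting \(|c|<1\) is fixed throughout Section~\ref{SectionPlanarPreliminaries}, so \(\rho=\ell|c|<1\) for every \(0\le\ell\le1\), including the endpoint \(\ell=1\). At \(\ell=0\) both sides equal \(1\).

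I expect no real obstacle. The main thing to verify is that the complex-argument extension of Mehler is the one cited. If a self-contained justification is wanted, I would expand both sides as power series in \(\rho\), noting they are entire in \((x,y)\), and invoke the identity theorem from the real-variable Mehler formula.
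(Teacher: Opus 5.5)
Your proposal is correct and follows essentially the paper's route. You write \(h_{k,c,N}\) as a rescaled Hermite polynomial, using \(\mathrm{He}_k\) with complex scale \(\sqrt{c/N}\) where the paper rotates by \(\mathrm{e}^{\mathrm{i}\theta}\) and uses \(H_k\) with real scale \(\sqrt{N/(2r)}\); like the paper, you then apply the DLMF Mehler formula at conjugate arguments with parameter \(\ell|c|<1\), justified by analytic continuation, and treat \(c=0\) as the exponential series.
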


For \(c=r\mathrm{e}^{2\mathrm{i}\theta}\), \(r>0\), the relation to
the physicists' Hermite polynomials is
\[
 h_{k,c,N}\left(\mathrm{e}^{\mathrm{i}\theta}\zeta\right)
 =\mathrm{e}^{\mathrm{i}k\theta}\left(\frac{r}{2N}\right)^{k/2}
 H_k\!\left(\sqrt{\frac{N}{2r}}\,\zeta\right).
\]
The cited formula applies to conjugate complex arguments by analytic
continuation, with parameter \(r\ell<1\); the case \(r=0\) is the
exponential series.

The elliptic law is classical.  We retain the short concentration
argument that makes its almost-sure conclusion valid for every coupling
of the prescribed matrix marginals.
\begin{lem}\label{LemAlmostSureEllipticLaw}
For every coupling of the matrices \(\mathbf{X}_{N,c}\) across \(N\),
\begin{equation}\label{EqAlmostSureEllipticLaw}
 L_{\mathbf{X}_{N,c}}\overset{\mathrm{w}}{\longrightarrow}\mu_c
 \qquad\text{almost surely}.
\end{equation}
\end{lem}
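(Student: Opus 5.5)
The plan is to separate the statement into a deterministic limit for the \emph{expected} empirical measure and a concentration step, uniform in the coupling. Since only the marginal law of each \(\mathbf X_{N,c}\) enters, the coupling affects nothing once we show, for each fixed test function \(f\) in a countable convergence-determining class, that
\[
 \sum_{N}\Prob\left(\left|\int f\,\dd L_{\mathbf X_{N,c}}-\int f\,\dd\mu_c\right|>\varepsilon\right)<\infty
 \qquad\text{for every }\varepsilon>0.
\]
Borel--Cantelli then holds whatever the joint law across \(N\) is, because it uses only the marginal probabilities. Intersecting over countably many \(f\) and rational \(\varepsilon\) gives the almost-sure weak convergence.

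\textbf{Step 1: the mean.} We take the classical elliptic law \cite[Theorem~1.8]{NguyenORourke}, which gives \(L_{\mathbf X_{N,c}}\to\mu_c\) in probability. Since the test functions are bounded, this already yields \(\E\int f\,\dd L_{\mathbf X_{N,c}}\to\int f\,\dd\mu_c\). Alternatively, and more self-containedly, we use the determinantal structure of Lemma~\ref{LemEllipticDPP}. The one-point density is \(N^{-1}K_{N,c}(z,z)\) relative to \(\gamma_{N;1,c,0}\), and the classical asymptotics for this Hermite kernel, which can be controlled by Mehler's formula in Lemma~\ref{LemComplexMehler}, give \(\E[L_{\mathbf X_{N,c}}]\to\mu_c\) weakly.

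\textbf{Step 2: concentration.} Eigenvalue linear statistics of a nonnormal matrix are not Lipschitz in the entries, so we exploit the determinantal structure instead. For a determinantal point process with a Hermitian projection kernel of rank \(N\), the linear statistic \(X_f=\sum_j f(\lambda_j)\) is, in law, a sum of independent Bernoulli variables when \(f\) is an indicator. For bounded \(f\) we have more generally the variance bound \(\operatorname{Var}X_f\leq\int|f|^2K(z,z)\,\dd\gamma\leq N\|f\|_\infty^2\). A sharper route is the exponential moment bound valid for any determinantal projection process (Breuer--Duits type, or via the Hough--Krishnapur--Peres--Virág Bernoulli decomposition applied to indicators of rectangles):
\[
 \Prob\left(\left|X_f-\E X_f\right|>tN\right)\leq 2\mathrm e^{-c_0 t^2N}
\]
for \(f\) an indicator of a rectangle. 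This is summable in \(N\). Taking \(f\) to range over indicators of rectangles with rational corners, a convergence-determining class for limits with Lebesgue density such as \(\mu_c\), completes the argument. Tightness is automatic because \(\mu_c\) has compact support and the rectangles exhaust the plane.

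The main obstacle is Step 2: obtaining a summable tail bound that does not depend on the coupling. I would first try the Bernoulli decomposition. For the projection kernel \(K_{N,c}\), the count in a Borel set \(B\) is distributed as a sum of independent Bernoulli variables whose parameters are the eigenvalues of the compressed operator \(\mathbf 1_B K_{N,c}\mathbf 1_B\). Hoeffding's inequality then gives \(\Prob(|X_B-\E X_B|>tN)\leq2\mathrm e^{-2t^2N}\), which is summable. If that becomes awkward, a fourth-moment bound from cumulant estimates for determinantal processes would also suffice for Borel--Cantelli.
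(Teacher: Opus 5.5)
Your proof is correct. It has the same architecture as the paper's: mean convergence from \cite[Theorem~1.8]{NguyenORourke}, which depends only on the marginal laws, then a summable exponential tail bound for each statistic, so Borel--Cantelli works for any coupling across $N$. The difference is the concentration tool.

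The paper applies the Pemantle--Peres inequality \cite[Theorem~3.5]{PemantlePeresConcentration} to the functional $\eta\mapsto(2\|f\|_\infty)^{-1}\int f\,\dd\eta$. This functional is $1$-Lipschitz for total variation on counting measures, and $f$ ranges over a countable determining subfamily of $\mathrm C_c(\C;\R)$. That gives almost-sure vague convergence directly, upgraded to weak convergence because $\mu_c$ has mass one.

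You instead use only counts in rational rectangles. The Hough--Krishnapur--Peres--Vir\'ag theorem writes each count as a sum of at most $N$ independent Bernoulli variables, because the compression $\mathbf 1_R K_{N,c}\mathbf 1_R$ of the rank-$N$ Hermitian projection has spectrum in $[0,1]$ and rank at most $N$. Hoeffding then gives the explicit bound $2\mathrm e^{-2t^2N}$.

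Your route is more elementary, since it avoids the strong Rayleigh machinery behind Pemantle--Peres. It does cost two steps the paper does not need:
\begin{enumerate}
\item The centring $\E L_{\mathbf X_{N,c}}(R)\to\mu_c(R)$ has to be deduced from mean weak convergence by the portmanteau theorem, using $\mu_c(\partial R)=0$. This is where the Lebesgue density of $\mu_c$ enters.
\item Passing from rectangles to weak convergence uses the $\pi$-system criterion: half-open rational rectangles are closed under intersection, and every open set is a countable union of them.
\end{enumerate}
The paper's route handles arbitrary bounded test functions, so it would work even if the limit charged lines.

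The variance bound and the Mehler-based alternative in Step~1 are not needed for your main argument.
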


\begin{proof}
After rotation to \(c=r=|c|\), the Gaussian entries satisfy
\cite[Definition~1.3 and Theorem~1.8]{NguyenORourke}, with
off-diagonal atom parameters \(\mu=1/2\) and \(\rho=r\).
Applying that theorem to corners of one infinite Gaussian array and
then bounded convergence gives
\(\E[\int_{\C} f\,\dd L_{\mathbf{X}_{N,c}}]\to\int_{\C} f\,\dd\mu_c\)
for every bounded continuous \(f\); this assertion depends only on
the individual matrix laws.
For bounded real \(f\ne0\), the functional
\(\eta\mapsto(2\|f\|_\infty)^{-1}\int_{\C} f\,\dd\eta\) is
Lipschitz with constant at most one for the total variation metric on
finite counting measures.  Lemma~\ref{LemEllipticDPP} and
\cite[Theorem~3.5]{PemantlePeresConcentration}, applied to this
functional and its negative, therefore give, for each \(\varepsilon>0\),
\[
 \Prob\left\{
 \left|\int_{\C}f\,\dd L_{\mathbf{X}_{N,c}}
 -\E\left[\int_{\C}f\,\dd L_{\mathbf{X}_{N,c}}\right]\right|>\varepsilon
 \right\}\leq6\mathrm{e}^{-b_{f,\varepsilon}N},
 \qquad b_{f,\varepsilon}>0.
\]
The zero function is immediate.  The first Borel--Cantelli lemma,
applied to a countable determining family in \(\mathrm{C}_c(\C;\R)\)
and positive rational \(\varepsilon\), gives almost-sure vague
convergence to \(\mu_c\).  Its mass is one, so the convergence is weak. Note that,
no independence across \(N\) is required in the argument.
\end{proof}

\subsection{The interior ellipse potential}

Write
\[
 U_c\left(z\right)\defeq\int_{\C}\log\left|z-w\right|\dd\mu_c\left(w\right),\qquad
 L_c\left(u\right)\defeq u+c\overline u,
\]
and define
\begin{equation}\label{EqEllipticObstacle}
 V_c\left(z\right)\defeq\frac12Q_{1,c,0}\left(z\right)-\frac12.
\end{equation}

Inside the ellipse, its potential is a quadratic function that
determines the shifted norm exponents in Section~\ref{SectionPlanarProof}.
The following formula is the interior part of the classical ellipse
potential.
\begin{lem}[{\cite[Lemma~4.1]{HallHoJalowyKabluchkoZeros}}]
\label{LemEllipticPotential}
The potential \(U_c\) is continuous on \(\C\), and
\begin{equation}\label{EqInteriorEllipsePotential}
 U_c\left(z\right)=V_c\left(z\right)\qquad\left(z\in\mathscr{E}_c\right).
\end{equation}
\end{lem}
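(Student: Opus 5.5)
The proof has three parts: continuity of \(U_c\), harmonicity of \(U_c-V_c\) on the interior via a Laplacian computation, and vanishing of the gradient and of the constant of \(U_c-V_c\). Only the gradient step needs real work.

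\textbf{Continuity.} By \eqref{EqEllipseDensity} with \(S=1\), \(\mu_c\) has compact support and a bounded density \(\rho_c=(\pi(1-|c|^2))^{-1}\mathbf 1_{\mathscr E_c}\). Continuity of \(U_c\) then follows from local integrability of \(\log|\cdot|\) and dominated convergence. Concretely, \(z\mapsto\int_{\mathscr E_c}\log|z-w|\,\dd A(w)\) is continuous because \(\log|\cdot|\in\mathrm L^1_{\mathrm{loc}}\) and translations act continuously in \(\mathrm L^1\).

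\textbf{Harmonicity of the difference.} Since \(\Delta|z|^2=4\) and \(\Re(\overline c z^2)\) is harmonic, \eqref{EqComplexQuadratic} and \eqref{EqEllipticObstacle} give
\[
\Delta V_c=\frac{2}{1-|c|^2}=2\pi\rho_c
\]
on the interior of \(\mathscr E_c\). Also \(\Delta U_c=2\pi\mu_c\) in distributions, so \(U_c-V_c\) is harmonic in the interior.

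\textbf{Vanishing gradient.} It suffices to show \(\partial_z(U_c-V_c)=0\) there, since \(U_c-V_c\) is real. We have
\[
\partial_zU_c=\tfrac12G(z),\qquad G(z)=\int\frac{\dd\mu_c(w)}{z-w},\qquad \partial_zV_c=\frac{\overline z-\overline c z}{2(1-|c|^2)} .
\]
To evaluate \(G\) inside the ellipse I use the Cauchy--Pompeiu formula applied to \(\overline w\):
\[
\int_{\mathscr E_c}\frac{\dd A(w)}{z-w}=\pi\overline z-\frac1{2\mathrm i}\oint_{\partial\mathscr E_c}\frac{\overline w}{w-z}\,\dd w .
\]
On the boundary \(w=u+c\overline u\) with \(|u|=1\), so \(\overline w=\overline c w+(1-|c|^2)/u(w)\). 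Here \(u(w)\) is the inverse of the exterior conformal map \(u\mapsto u+c/u\) of \(\{|u|>1\}\) onto the exterior of the ellipse. Thus \(1/u(w)\) is holomorphic outside the ellipse and \(O(1/w)\) at infinity. Deforming the contour to infinity kills that term. The term \(\overline c w\) contributes \(2\pi\mathrm i\,\overline c z\) by the residue at \(w=z\). Hence \(G(z)=\rho_c\pi(\overline z-\overline c z)=(\overline z-\overline c z)/(1-|c|^2)\), which matches \(2\partial_zV_c\). Verifying this Schwarz-function identity carefully is the main obstacle, including the orientation and the degenerate case \(c=0\), where the formula is immediate.

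\textbf{The constant.} It remains to check \(U_c(0)=V_c(0)=-\tfrac12\). Since \(\mu_c\) is the pushforward of the uniform law on \(\D\) under the real-linear map \(L_c\), write that uniform law as \(\int_0^1 2t\,\sigma_t\,\dd t\), with \(\sigma_t\) the uniform law on the circle of radius \(t\). For \(|u|=1\) we have
\[
\log|L_c(tu)|=\log t+\log|1+c\overline u^2| .
\]
The last term has zero circle average by the mean value property, because \(|c|<1\). Therefore
\[
U_c(0)=\int_0^12t\log t\,\dd t=-\tfrac12 ,
\]
so \(U_c=V_c\) on the interior. Continuity of both sides extends the identity to all of \(\mathscr E_c\).
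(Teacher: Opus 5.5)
Your proof is correct, and it takes a different route from the paper. The paper gives no derivation at all: it cites \cite[Lemma~4.1]{HallHoJalowyKabluchkoZeros}, rotates by \(\theta=\arg(c)/2\) to reach the reference's ellipse with semiaxes \(1\pm|c|\), and treats \(c=0\) as the limit \(r=0\). It also notes that continuity follows from the bounded compactly supported density, which is your first step. You instead prove the interior identity directly, in three parts:
\begin{itemize}
\item the Laplacians of \(U_c\) and \(V_c\) match;
\item the Cauchy transform \(G(z)=(\overline z-\overline c z)/(1-|c|^2)\) is computed from Cauchy--Pompeiu and the Schwarz function \(\overline w=\overline c w+(1-|c|^2)/u(w)\) of the ellipse;
\item the constant is fixed by \(U_c(0)=-\tfrac12\), via polar coordinates and the mean value property of \(\log|1+c\zeta|\).
\end{itemize}
The details check out. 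In particular, \(u\mapsto u+c/u\) is injective on \(|u|\geq1\), because two distinct preimages would need \(u_1u_2=c\), which is impossible there. The vanishing of the contour term can also be seen by parametrizing with \(u\): the integrand becomes \((u^2-c)/(u^2(u^2-zu+c))\), which for interior \(z\) has all its poles in \(\D\) and no residue at infinity. Your route gives a self-contained argument valid uniformly for all complex \(|c|<1\), with no rotation or limiting step, and it verifies the normalization of \(Q_{1,c,0}\) rather than importing it. The citation buys brevity. Note also that once \(\partial_z(U_c-V_c)=0\) on the connected interior, your separate harmonicity step is needed only for regularity, which the bounded density already provides.
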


For \(c=r\mathrm{e}^{2\mathrm{i}\theta}\), rotate the formula in the
reference by \(\theta\); its ellipse has semiaxes \(1+r\) and \(1-r\).
The disk case is the limit \(r=0\), and continuity also follows from
the bounded compactly supported density of \(\mu_c\).

\section{Proof of the planar theorem}
\label{SectionPlanarProof}

Fix \(c_0,c_1\in\C\) with \(|c_0|,|c_1|<1\), and set
\begin{equation}\label{EqPlanarPQ}
 p_N\defeq p_{N,c_0},\qquad
 q_N\defeq\exp\!\left(-\frac{c_1-c_0}{2N}\partial_z^2\right)p_N.
\end{equation}
The following proposition gives the zero limit at mixed covariance one.
Adding the independent Gaussian source and restoring scale and translation
will then prove Theorem~\ref{ThmPlanar}.

\begin{prop}\label{PropNormalisedPlanar}
For every coupling of \((\mathbf{X}_{N,c_0})_{N\geq1}\) with the
prescribed elliptic Gaussian marginals,
\[
 \nu_{q_N}\overset{\mathrm{w}}{\longrightarrow}\mu_{c_1}
 \qquad\text{almost surely}.
\]
\end{prop}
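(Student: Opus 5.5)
The proof follows the planar strategy of Section~\ref{SectionProofOutline}. Put \(h_N\defeq N^{-1}\log|q_N|\), so that \(\nu_{q_N}=(2\pi)^{-1}\Delta h_N\) by \eqref{EqPolynomialLaplacian}.

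\textbf{Step 1: global upper bound.} I would first show that almost surely \(h_N\leq V_{c_1}+\varepsilon_N\) on \(\C\), with deterministic \(\varepsilon_N\to0\).

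Apply Proposition~\ref{PropNormTransfer} with \(v=1\), \(a=0\). This converts \(\int|q_N|^2\,\dd\gamma_{N;1,c_1,0}\) into \(\int|p_N|^2\,\dd\gamma_{N;1,c_0,0}\). Using the joint density \eqref{EqEigenvalueDensity}, the expectation \(\E|p_N(z)|^2\) is a ratio of normalising constants times \(K_{N+1,c_0}(z,z)\), by the usual characteristic-polynomial kernel identity. Integrating against \(\gamma_{N;1,c_0,0}\) gives a quantity of size \(N+1\) times constants, and Stirling shows its exponential rate is \(\mathrm{e}^{-N+o(N)}\).

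Markov's inequality with slack \(\mathrm{e}^{\delta N}\), followed by Borel--Cantelli, gives almost surely
\[
\int|q_N|^2\,\dd\gamma_{N;1,c_1,0}\leq \mathrm{e}^{-N+o(N)}.
\]
To pass to a pointwise bound I would use the evaluation bound of Lemma~\ref{LemEvaluationKernel}. Expand \(q_N\) in the orthogonal basis \(h_{k,c_1,N}\) of \(\mathrm L^2(\gamma_{N;1,c_1,0})\), apply Cauchy--Schwarz, and bound the diagonal kernel by Mehler (Lemma~\ref{LemComplexMehler}) with \(\ell=1\). This yields \(|q_N(z)|^2\leq \mathrm{e}^{NQ_{1,c_1,0}(z)}\,\mathrm{poly}(N)\,\|q_N\|^2\), hence \(2h_N\leq Q_{1,c_1,0}-1+o(1)=2V_{c_1}+o(1)\).

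By Lemma~\ref{LemSubharmonicCompactness} and \eqref{EqMonicDiskMean}, every subsequence then has a further subsequence with \(h_N\to\psi\) in \(\mathrm L^1_{\mathrm{loc}}\), where \(\psi\) is subharmonic and \(\psi\leq V_{c_1}\).

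\textbf{Step 2: shifted rates.} Fix \(\max\{|c_0|,|c_1|\}<v<1\) and a countable dense set \(D\subset\D\). For \(u\in D\) let \(J_N(u)\) be the shifted \(p_N\)-norm from the outline. I would show \(N^{-1}\log J_N(u)\to M(u)\) almost surely (Proposition~\ref{PropShiftedExponent}).

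For the upper bound, the same kernel formula gives \(\E|p_N(z)|^2\) explicitly. Integrating against \(\gamma_{N;v,c_0,(1-v)u}\) and bounding the kernel by Mehler reduces the rate to a Gaussian Laplace integral, \(\sup_z[Q_{1,c_0,0}(z)-1-Q_{v,c_0,(1-v)u}(z)]\), which should evaluate to \(M(u)\). Markov and Borel--Cantelli then give the almost-sure upper bound.

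For the lower bound, use the almost-sure elliptic law for \(p_N\) (Lemma~\ref{LemAlmostSureEllipticLaw}) and Lemma~\ref{LemSubharmonicCompactness} for \(N^{-1}\log|p_N|\). Upper semicontinuity gives convergence to \(U_{c_0}\) in \(\mathrm L^1_{\mathrm{loc}}\), so on a small disk around the maximiser \(L_{c_0}(u)\) the integrand is at least \(\mathrm{e}^{N(M(u)-\delta)}\) on a set of non-negligible measure. This uses Lemma~\ref{LemEllipticPotential} together with the quadratic completion identity with \(c_0\) in place of \(c_1\).

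Proposition~\ref{PropNormTransfer} then transfers the same rate \(M(u)\) to \(\int|q_N|^2\,\dd\gamma_{N;v,c_1,(1-v)u}\).

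\textbf{Step 3: identification of \(\psi\).} Take \(Q=Q_{v,c_1,(1-v)u}\). Because \(v<1\), the global bound of Step 1 makes \(2h_N-Q\) tend to \(-\infty\) quadratically at infinity, uniformly in \(N\), so the integral localises to a fixed compact disk \(K\). A Laplace-type estimate then gives
\[
M(u)\leq\limsup_N \sup_K(2h_N-Q).
\]
Hartogs' inequality \eqref{EqHartogs} bounds the right side by \(\sup_K(2\psi-Q)\).

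Since \(2\psi-Q\leq 2V_{c_1}-Q=M(u)-B_{v,c_1}(z-L_{c_1}(u))\), whose unique maximum is at \(L_{c_1}(u)\), the supremum of \(2\psi-Q\) equals \(M(u)\). It is approached along points tending to \(L_{c_1}(u)\). Upper semicontinuity of \(\psi\) then gives \(\psi(L_{c_1}(u))=V_{c_1}(L_{c_1}(u))\).

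The points \(L_{c_1}(u)\), \(u\in D\), are dense in the interior of \(\mathscr E_{c_1}\). Both \(\psi\) and \(V_{c_1}\) are subharmonic, and \(\psi\leq V_{c_1}\). The set where the two agree is dense, and by upper semicontinuity together with the sub-mean value property this should upgrade to \(\psi=V_{c_1}\) on the open interior. Concretely, \(V_{c_1}-\psi\) is lower semicontinuous and nonnegative, and vanishing on a dense set forces its disk averages to vanish. I expect this to need care, perhaps by averaging over small disks of shifts.

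Hence \((2\pi)^{-1}\Delta\psi=\mu_{c_1}\) on the interior, where by Lemma~\ref{LemEllipticPotential} the Laplacian of \(V_{c_1}\) is the ellipse density. That is already mass one, so the limit of \(\nu_{q_N}\) is \(\mu_{c_1}\) weakly along the subsequence. As the subsequence was arbitrary, the full sequence converges.

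\textbf{Main obstacle.} The matching lower bound in Step 2 is where I expect the most difficulty. It needs the \(\mathrm L^1_{\mathrm{loc}}\) convergence of \(N^{-1}\log|p_N|\) to \(U_{c_0}\) to produce pointwise-large values near one point. Since \(\mathrm L^1\) convergence does not give lower bounds pointwise, I would instead integrate: apply Jensen to \(\log\) of the Gaussian integral restricted to a small disk, which requires only the averages of \(N^{-1}\log|p_N|\).
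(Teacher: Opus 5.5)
Your proposal is correct and follows the paper's proof essentially step for step. The shared route is: the global bound from norm transfer at \(v=1\), the kernel identity and the Mehler evaluation bound; the shifted rates for \(p_N\), via Markov--Borel--Cantelli above and Jensen plus the elliptic law below, transferred to \(q_N\); and Hartogs with Gaussian tail localisation, the strict quadratic maximiser at \(L_{c_1}(u)\), density, and the mass-one argument.

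The step you flagged needs no care. Since \(V_{c_1}-\psi\) is lower semicontinuous, the set where it is positive is open, and an open subset of the interior that misses a dense set is empty. Equivalently, \(\psi(z)\geq\limsup_j\psi(z_j)=V_{c_1}(z)\) along dense points \(z_j\to z\).

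Similarly, the Step~2 lower bound does not need \(\mathrm L^1_{\mathrm{loc}}\) convergence of \(N^{-1}\log|p_N|\), which your upper-semicontinuity remark does not justify. It only needs Portmanteau applied to the disk-averaged kernel \(w\mapsto(\pi r^2)^{-1}\int_{\mathscr D(z_0,r)}\log|z-w|\,\dd A(z)\), which is continuous and bounded below.
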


Choose once and for all
\begin{equation}\label{EqAuxiliaryVariance}
 \max\left\{\left|c_0\right|,\left|c_1\right|\right\}<v<1.
\end{equation}

\subsection{Quadratic completion and polynomial bounds}

The next quadratic completion locates the unique point selected by each
shifted Gaussian norm. Its strict maximum will force equality with the
target potential at that point.

For \(|c|<v<1\), put
\[
 B_{v,c}\left(z\right)\defeq Q_{v,c,0}\left(z\right)-Q_{1,c,0}\left(z\right).
\]
Writing \(c=r\mathrm{e}^{2\mathrm{i}\theta}\) with \(r=|c|\), the
coefficients of this real quadratic form in the rotated coordinates are
\[
 \frac1{v\pm r}-\frac1{1\pm r}
 =\frac{1-v}{\left(v\pm r\right)\left(1\pm r\right)}>0.
\]
Thus \(B_{v,c}\) is positive definite. Completing the two squares and
integrating the resulting centred Gaussian give, for every \(a\in\C\),
\begin{equation}\label{EqQuadraticCompletion}
 Q_{1,c,0}\left(z\right)-Q_{v,c,a}\left(z\right)
 =\frac{\left|a\right|^2}{1-v}
 -B_{v,c}\!\left(z-\frac{a+c\overline a}{1-v}\right),
\end{equation}
and
\begin{equation}\label{EqGaussianCompletion}
 \int_{\C}\mathrm{e}^{NQ_{1,c,0}\left(z\right)}\dd\gamma_{N;v,c,a}\left(z\right)
 =\frac{\sqrt{1-\left|c\right|^2}}{1-v}
   \exp\!\left(\frac{N\left|a\right|^2}{1-v}\right).
\end{equation}
In particular, if \(a=(1-v)u\), then
\begin{equation}\label{EqStrictQuadratic}
 2V_c\left(z\right)-Q_{v,c,\left(1-v\right)u}\left(z\right)
 =-1+\left(1-v\right)\left|u\right|^2-B_{v,c}\left(z-L_c\left(u\right)\right).
\end{equation}

We next recall the characteristic-polynomial kernel identity, which is
the \(K=L=1\) case of \cite[equations~(2.11)--(2.12)]{AkemannVernizzi}.
Combined with Mehler's formula, it bounds both the expected squared
determinant and pointwise polynomial evaluation.

\begin{lem}\label{LemEvaluationKernel}
For \(|c|<1\),
\begin{equation}\label{EqChristoffelIdentity}
 \E\left[\left|p_{N,c}\left(z\right)\right|^2\right]
 =\frac{N!}{N^N}\sum_{k=0}^N\frac{N^k}{k!}\left|h_{k,c,N}\left(z\right)\right|^2.
\end{equation}
Consequently,
\begin{equation}\label{EqChristoffelUpper}
 \E\left[\left|p_{N,c}\left(z\right)\right|^2\right]
 \leq\frac{N!}{N^N\sqrt{1-\left|c\right|^2}}\,
           \mathrm{e}^{NQ_{1,c,0}\left(z\right)}.
\end{equation}
Every polynomial \(P\) of degree at most \(N\) also satisfies
\begin{equation}\label{EqEvaluationBound}
 \left|P\left(z\right)\right|^2\leq
 \frac{\left\|P\right\|_{\mathrm{L}^2\left(\gamma_{N;1,c,0}\right)}^2}{\sqrt{1-\left|c\right|^2}}
 \mathrm{e}^{NQ_{1,c,0}\left(z\right)}.
\end{equation}
\end{lem}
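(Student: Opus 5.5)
The plan has three steps: prove the kernel identity \eqref{EqChristoffelIdentity} from the determinantal structure of Lemma~\ref{LemEllipticDPP}, then bound the partial sum by the full Mehler series, and finally derive the evaluation bound \eqref{EqEvaluationBound} from the reproducing property of the same kernel.

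\emph{Step 1: the identity.} By \eqref{EqEigenvalueDensity}, \(|p_{N,c}(z)|^2=\prod_j|z-\lambda_j|^2\), and multiplying by the density gives
\[
|\Delta(\lambda)|^2\prod_j|z-\lambda_j|^2=|\Delta(\lambda_1,\dots,\lambda_N,z)|^2 .
\]
Expanding the Vandermonde in the monic orthogonal family \(h_{k,c,N}\), which is orthogonal in \(\mathrm L^2(\gamma_{N;1,c,0})\) by \eqref{EqComplexOrthogonality} at \(v=1\), \(a=0\), gives the Andréief/Heine formula
\[
\int|\Delta(\lambda,z)|^2\prod_j\dd\gamma(\lambda_j)=N!\prod_{k=0}^{N}\kappa_{k,N}\sum_{k=0}^N\frac{|h_{k,c,N}(z)|^2}{\kappa_{k,N}} .
\]
To see this, write the \((N+1)\times(N+1)\) Vandermonde as a determinant with rows \(h_k(\lambda_j)\) and last row \(h_k(z)\). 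Expand both determinants along the \(z\)-row and integrate out the \(\lambda\)'s using orthogonality. Only matching cofactors survive, giving \(\sum_k|h_k(z)|^2\,N!\prod_{j\neq k}\kappa_j\). Dividing by \(\mathcal Z_{N,c}=N!\prod_{k<N}\kappa_{k,N}\) leaves
\[
\kappa_{N,N}\sum_{k=0}^N\frac{|h_k(z)|^2}{\kappa_{k,N}}=\frac{N!}{N^N}\sum_{k=0}^N\frac{N^k}{k!}|h_k(z)|^2 ,
\]
which is \eqref{EqChristoffelIdentity}. This is exactly the \(K=L=1\) case of the cited Akemann--Vernizzi formula, so this step may be quoted; the check above is only bookkeeping.

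\emph{Step 2: the upper bound \eqref{EqChristoffelUpper}.} Every term of the sum is nonnegative, so the finite sum is at most the full series. Lemma~\ref{LemComplexMehler} at \(\ell=1\) sums that series to
\[
(1-|c|^2)^{-1/2}\exp\!\bigl[N(|z|^2-\Re(\bar c z^2))/(1-|c|^2)\bigr]=(1-|c|^2)^{-1/2}\mathrm e^{NQ_{1,c,0}(z)}
\]
by \eqref{EqComplexQuadratic}. The only point to check is that \(\ell=1\) is admissible: it is, since \(|c|\ell<1\).

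\emph{Step 3: the evaluation bound \eqref{EqEvaluationBound}.} For \(P\) of degree at most \(N\), use the kernel of degree \(N\) (one more term than \(K_{N,c}\)), namely \(\widetilde K(z,w)=\sum_{k=0}^N h_k(z)\overline{h_k(w)}/\kappa_{k,N}\). By \eqref{EqComplexOrthogonality} this is the reproducing kernel of polynomials of degree at most \(N\) in \(\mathrm L^2(\gamma_{N;1,c,0})\). Hence \(P(z)=\langle P,\widetilde K(\cdot,z)\rangle\), and Cauchy--Schwarz gives
\[
|P(z)|^2\le\|P\|^2\,\widetilde K(z,z) .
\]
Bounding \(\widetilde K(z,z)\) by the same Mehler series yields the claim. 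The main (mild) obstacle is only matching normalisations, i.e.\ that \(\kappa_{k,N}=k!/N^k\) agrees with \eqref{EqComplexOrthogonality} at \(v=1\); everything else is positivity of the series terms.
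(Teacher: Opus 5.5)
Your proposal is correct and follows the paper's proof. The identity \eqref{EqChristoffelIdentity} is the $K=L=1$ case of Akemann--Vernizzi, which you also check directly by expanding the $(N+1)\times(N+1)$ Vandermonde along the $z$-row and integrating out the $\lambda_j$ with the orthogonality \eqref{EqComplexOrthogonality} at $v=1$; the two bounds then come from Mehler's formula at $\ell=1$ and from Cauchy--Schwarz against the degree-$N$ reproducing kernel, exactly as in the paper, and your normalisations ($\kappa_{k,N}=k!/N^k$, $\mathcal Z_{N,c}$) are right.
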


\begin{proof}
Apply \cite[equations~(2.11)--(2.12)]{AkemannVernizzi} at fixed \(N\)
with \(K=L=1\) and both evaluation arguments equal to \(z\). By
\eqref{EqComplexOrthogonality}, the degree-\(N\) monic squared norm is
\(N!/N^N\), and the orthonormal polynomials are
\(N^{k/2}h_{k,c,N}/\sqrt{k!}\). The resulting kernel sum runs through
degree \(N\), giving \eqref{EqChristoffelIdentity}. Bounding this sum
by Mehler's formula \eqref{EqComplexMehler} at \(\ell=1\) gives
\eqref{EqChristoffelUpper}. Finally, Cauchy--Schwarz in the same
orthonormal basis bounds \(\left|P(z)\right|^2\) by its squared norm
times that truncated kernel. The same Mehler bound proves
\eqref{EqEvaluationBound}.
\end{proof}

\subsection{Shifted norms and global growth}
\label{SectionShiftedNorms}

The following proposition computes the exponential rate of each shifted
norm of the initial characteristic polynomial. Norm transfer will give
the same rate for the heat-evolved polynomial at the target covariance.

\begin{prop}\label{PropShiftedExponent}
For \(u\in\D\), put \(a=(1-v)u\) and
\[
 J_N\left(u\right)\defeq\int_{\C}\left|p_N\left(z\right)\right|^2\dd\gamma_{N;v,c_0,a}\left(z\right).
\]
Then
\begin{equation}\label{EqShiftedExponent}
 \frac1N\log J_N\left(u\right)\longrightarrow-1+\left(1-v\right)\left|u\right|^2
 \qquad\text{almost surely}.
\end{equation}
The conclusion holds simultaneously for every prescribed countable
subset of \(\D\).
\end{prop}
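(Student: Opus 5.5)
We prove matching upper and lower bounds for \(N^{-1}\log J_N(u)\) at a fixed \(u\in\D\), each almost surely. Since countable intersections of almost-sure events are almost sure, the simultaneous statement for any prescribed countable subset then follows at once. Throughout we write \(M(u)\defeq-1+(1-v)|u|^2\) and \(a=(1-v)u\).

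\textbf{Upper bound.} By Fubini and \eqref{EqChristoffelUpper} at \(c=c_0\),
\[
 \E\left[J_N(u)\right]
 \leq\frac{N!}{N^N\sqrt{1-\left|c_0\right|^2}}
 \int_{\C}\mathrm{e}^{NQ_{1,c_0,0}(z)}\dd\gamma_{N;v,c_0,a}(z).
\]
Evaluating the last integral with \eqref{EqGaussianCompletion} gives
\[
 \E\left[J_N(u)\right]\leq
 \frac{N!}{N^N}\frac{1}{1-v}\,\mathrm{e}^{N(1-v)|u|^2}.
\]
Stirling's formula gives \(N!/N^N=\mathrm{e}^{-N+O(\log N)}\), so \(\E[J_N(u)]\leq\mathrm{e}^{N(M(u)+o(1))}\). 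Markov's inequality then yields
\[
 \Prob\left\{J_N(u)>\mathrm{e}^{N(M(u)+\varepsilon)}\right\}\leq\mathrm{e}^{-N\varepsilon/2}
\]
for large \(N\). This bound is summable, so Borel--Cantelli gives \(\limsup_N N^{-1}\log J_N(u)\leq M(u)\) almost surely. No independence across \(N\) is needed, because only the marginal laws enter.

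\textbf{Lower bound.} Here we use the initial elliptic law. Write \(J_N(u)=\int\mathrm{e}^{2N\phi_N}\dd\gamma_{N;v,c_0,a}\) with \(\phi_N=N^{-1}\log|p_N|=U_{L_{\mathbf X_{N,c_0}}}\). Lemma~\ref{LemAlmostSureEllipticLaw} gives \(L_{\mathbf X_{N,c_0}}\to\mu_{c_0}\) weakly almost surely, and the supports are almost surely eventually bounded by standard operator-norm bounds for Gaussian matrices. The key point is that, by \eqref{EqStrictQuadratic}, the function \(2V_{c_0}-Q_{v,c_0,a}\) attains its maximum \(M(u)\) at \(z_*=L_{c_0}(u)\), which lies in the interior of \(\mathscr E_{c_0}\) since \(|u|<1\). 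It therefore suffices to show that, for small \(\delta>0\) and large \(N\),
\[
 \int_{\mathscr D(z_*,\delta)}\mathrm{e}^{2N\phi_N}\dd A\geq
 \pi\delta^2\,\mathrm{e}^{2N\left(V_{c_0}(z_*)-\eta(\delta)\right)},
 \qquad \eta(\delta)\to0 .
\]
For this we apply Jensen's inequality to the normalized disk average: \(\frac1{\pi\delta^2}\int_{\mathscr D}\mathrm{e}^{2N\phi_N}\geq\exp\bigl(2N\cdot\text{avg}_{\mathscr D}\phi_N\bigr)\). The disk average of \(\phi_N\) equals \(\int g_\delta\,\dd L_{\mathbf X_{N,c_0}}\), where \(g_\delta(w)\) is the disk average of \(\log|z-w|\) given just before \eqref{EqMonicDiskMean}. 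Since \(g_\delta\) is bounded and continuous on compacts, this average converges to \(\int g_\delta\,\dd\mu_{c_0}\), which is the disk average of \(U_{c_0}\). That quantity tends to \(U_{c_0}(z_*)=V_{c_0}(z_*)\) as \(\delta\to0\), by Lemma~\ref{LemEllipticPotential} and continuity. The density of \(\gamma_{N;v,c_0,a}\) on the disk is at least \(\mathrm{e}^{-N(Q_{v,c_0,a}(z_*)+\eta(\delta))}\), and the prefactor is polynomial in \(N\). Combining these gives \(\liminf_N N^{-1}\log J_N(u)\geq M(u)-3\eta(\delta)\) almost surely, and letting \(\delta\downarrow0\) along a sequence yields \(\liminf_N N^{-1}\log J_N(u)\geq M(u)\).

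\textbf{Expected main obstacle.} The delicate step is the lower bound, because \(\log\) is singular and weak convergence alone does not control \(\phi_N\) pointwise. The disk-averaging plus Jensen device above is meant to circumvent this, since \(g_\delta\) is continuous. One must still check that weak convergence applies to \(g_\delta\), which grows logarithmically at infinity. This can be handled either by the almost-sure eventual boundedness of the spectrum, or more simply by truncating \(g_\delta\) from above: Jensen's inequality only needs a lower bound on the average, and \(\min(g_\delta,C)\) is bounded and continuous.
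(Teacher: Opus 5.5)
Your proposal is correct and follows essentially the same route as the paper. The upper bound uses \eqref{EqChristoffelUpper}, \eqref{EqGaussianCompletion}, Stirling, Markov and Borel--Cantelli, and the lower bound uses Jensen's inequality on a small disk around $L_{c_0}(u)$, the elliptic law applied to the disk-averaged logarithmic kernel, continuity of $U_{c_0}$, Lemma~\ref{LemEllipticPotential} and \eqref{EqStrictQuadratic}. The only differences are cosmetic: you use a Markov threshold $\mathrm{e}^{N\varepsilon}$ instead of $\mathrm{e}^{\sqrt N}$, and you truncate $g_\delta$ from above where the paper invokes the portmanteau theorem for a function bounded below.
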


\begin{proof}
Combining \eqref{EqChristoffelUpper} and \eqref{EqGaussianCompletion}
gives
\[
 \E\left[J_N\left(u\right)\right]\leq\frac{N!}{N^N\left(1-v\right)}
                      \mathrm{e}^{N\left(1-v\right)\left|u\right|^2}.
\]
Markov's inequality with factor \(\mathrm{e}^{\sqrt N}\), the first
Borel--Cantelli lemma, and Stirling's formula yield
\begin{equation}\label{EqShiftedUpper}
 \limsup_N\frac1N\log J_N\left(u\right)\leq-1+\left(1-v\right)\left|u\right|^2
 \qquad\text{almost surely}.
\end{equation}

For the lower bound, work on the event in
Lemma~\ref{LemAlmostSureEllipticLaw}, and put
\(z_0=L_{c_0}(u)\).  For a fixed disk
\(D=\mathscr{D}(z_0,r)\), the averaged logarithmic kernel is
\[
 F_D\left(w\right)\defeq\frac1{\pi r^2}\int_D\log\left|z-w\right|\dd A\left(z\right).
\]
This is the potential of the uniform probability measure on \(D\),
so it is continuous and bounded below on \(\C\). Factorisation of \(p_N\)
and Jensen's inequality on \(D\) imply
\[
 J_N\left(u\right)\geq\frac{Nr^2}{\sqrt{v^2-\left|c_0\right|^2}}
 \exp\!\left(
 N\left[2\int_{\C} F_D\dd L_{\mathbf{X}_{N,c_0}}
               -\sup_D Q_{v,c_0,a}\right]\right).
\]
Portmanteau applies to \(F_D\), since it is bounded below, and gives
\[
 \liminf_N\frac1N\log J_N\left(u\right)
 \geq\frac2{\pi r^2}\int_D U_{c_0}\left(z\right)\dd A\left(z\right)
       -\sup_D Q_{v,c_0,a}.
\]
Letting \(r\downarrow0\) and using continuity of \(U_{c_0}\) yields
the lower bound \(2U_{c_0}(z_0)-Q_{v,c_0,a}(z_0)\).
Since \(z_0\) lies in the ellipse interior,
Lemma~\ref{LemEllipticPotential} and \eqref{EqStrictQuadratic} identify
this value as \(-1+(1-v)|u|^2\).  This proves
\eqref{EqShiftedExponent}.  The lower-bound argument is deterministic on the probability-one event
where \(L_{\mathbf X_{N,c_0}}\) converges weakly to \(\mu_{c_0}\),
supplied by Lemma~\ref{LemAlmostSureEllipticLaw}. On this same event,
the argument applies to every \(u\in\D\), without introducing further
exceptional sets. For each fixed \(u\), the Markov--Borel--Cantelli
argument gives the upper bound on a possibly \(u\)-dependent event of
probability one. Given a prescribed countable set
\(\mathscr U\subset\D\), intersect these upper-bound events with the
elliptic-law event. The intersection still has probability one, and
both bounds hold there for every \(u\in\mathscr U\), proving the
stated simultaneous convergence.
\end{proof}

The pathwise identity in Proposition~\ref{PropNormTransfer}
equates the initial and target shifted norms. Hence, on the same
probability-one event,
\begin{equation}\label{EqTargetShiftedNorm}
 \frac1N\log\int_{\C}\left|q_N\left(z\right)\right|^2
       \dd\gamma_{N;v,c_1,\left(1-v\right)u}\left(z\right)
 \longrightarrow-1+\left(1-v\right)\left|u\right|^2.
\end{equation}

We also need a bound which holds simultaneously throughout the plane.
The unshifted norm and the evaluation kernel give this bound and control
the tails in the later variational argument.

Norm transfer with \(v=1\), \(a=0\), and integration of
\eqref{EqChristoffelIdentity} using \eqref{EqComplexOrthogonality} give
\begin{equation}\label{EqExpectedGlobalNorm}
 \E\left[\left\|q_N\right\|_{\mathrm{L}^2\left(\gamma_{N;1,c_1,0}\right)}^2\right]
 =\left(N+1\right)\frac{N!}{N^N}.
\end{equation}
Write \(R_N=\|q_N\|_{\mathrm L^2(\gamma_{N;1,c_1,0})}^2\).
Markov's inequality gives
\[
 \Prob\left[R_N>\mathrm{e}^{\sqrt N}\left(N+1\right)\frac{N!}{N^N}\right]
 \leq\mathrm{e}^{-\sqrt N}.
\]
The right-hand side is summable, so Borel--Cantelli and Stirling's
formula imply, almost surely for all sufficiently large \(N\),
\[
 \log R_N\leq\sqrt N+\log\left(N+1\right)+\log\left(N!\right)-N\log N
 =-N+O\left(\sqrt N+\log N\right).
\]
Taking logarithms in the evaluation bound \eqref{EqEvaluationBound}
now gives, for every \(z\in\C\),
\[
\begin{aligned}
 \frac1N\log\left|q_N\left(z\right)\right|
 &\leq\frac12Q_{1,c_1,0}\left(z\right)
       +\frac{\log R_N}{2N}
       -\frac{\log\left(1-|c_1|^2\right)}{4N}\\
 &=V_{c_1}\left(z\right)+\frac{\log R_N+N}{2N}
       -\frac{\log\left(1-|c_1|^2\right)}{4N}.
\end{aligned}
\]
The positive part of the scalar error on the right defines a sequence
\(\varepsilon_N\geq0\) tending to zero almost surely. Thus
\begin{equation}\label{EqGlobalPlanarBound}
 \frac1N\log\left|q_N\left(z\right)\right|\leq V_{c_1}\left(z\right)+\varepsilon_N
 \qquad\left(z\in\C\right).
\end{equation}
The error depends only on \(R_N\), \(N\), and \(c_1\), so the bound
holds simultaneously throughout the plane.

\subsection{Identification by subharmonic compactness}

We now use the shifted rates to identify every possible subharmonic
limit in the ellipse interior. The argument is related to
\cite[Theorem~2.4]{BayraktarMass} and the compactness methods of
\cite{BloomLevenberg}. There one assumes convergence of weighted
polynomial mass measures in a fixed weighted setting while here the input is
the exponential rate of a family of shifted Gaussian norms. We give
the identification directly, using their distinct quadratic maximizers.

\begin{proof}[Proof of Proposition~\ref{PropNormalisedPlanar}]
Fix a countable dense set \(\mathscr{U}\subset\D\), and work on the
probability-one event where \eqref{EqTargetShiftedNorm} holds for every
\(u\in\mathscr{U}\) and \eqref{EqGlobalPlanarBound} holds.  Set
\[
 u_N\left(z\right)\defeq\frac1N\log\left|q_N\left(z\right)\right|.
\]
These subharmonic functions are locally uniformly bounded above.  Since
\(q_N\) is monic of degree \(N\), \eqref{EqMonicDiskMean} excludes
collapse to \(-\infty\).  Thus every subsequence has a further
subsequence, still indexed by \(N\), for which
\[
 u_N\longrightarrow\psi\quad\text{in }\mathrm{L}^1_{\mathrm{loc}}\left(\C\right)
\]
by Lemma~\ref{LemSubharmonicCompactness}.  The limit \(\psi\) is
subharmonic. Pass the global bound to disk averages using the
local \(\mathrm L^1\) convergence, then shrink the disks. The
submean inequality and continuity of \(V_{c_1}\) give
\begin{equation}\label{EqLimitObstacleBound}
 \psi\leq V_{c_1}\quad\text{on }\C.
\end{equation}

Fix \(u\in\mathscr{U}\), and abbreviate
\[
 a\defeq\left(1-v\right)u,\qquad Q\defeq Q_{v,c_1,a},\qquad
 M\defeq-1+\left(1-v\right)\left|u\right|^2,\qquad z_*\defeq L_{c_1}\left(u\right).
\]
The bound in Lemma~\ref{LemSubharmonicCompactness} gives, on each
compact disk \(K_R=\overline{\mathscr{D}(0,R)}\),
\[
 \limsup_N\sup_{K_R}\left(2u_N-Q\right)\leq\sup_{K_R}\left(2\psi-Q\right).
\]
Furthermore, \eqref{EqGlobalPlanarBound} and \eqref{EqStrictQuadratic}
give constants \(C\in\R\), \(\eta>0\), independent of \(N\), such that
\[
 2u_N\left(z\right)-Q\left(z\right)\leq C-\eta\left|z\right|^2+2\varepsilon_N.
\]
Put \(d_N=N/[\pi\sqrt{v^2-|c_1|^2}]\).  The integral over \(K_R\) in
\eqref{EqTargetShiftedNorm} is at most
\[
 d_N\pi R^2\exp\!\left(N\sup_{K_R}\left(2u_N-Q\right)\right),
\]
whereas its complement contributes at most
\[
 d_N\frac{\pi}{N\eta}
 \exp\!\left(N\left(C+2\varepsilon_N-\eta R^2\right)\right).
\]
The norm integral has exponential rate \(M\) by
\eqref{EqTargetShiftedNorm}. The two bounds above, the Hartogs estimate,
and \(N^{-1}\log d_N\to0\) therefore give, for fixed \(R\),
\[
 M\leq\max\left\{\sup_{K_R}\left(2\psi-Q\right),C-\eta R^2\right\}.
\]
Choose \(R\) large enough that \(C-\eta R^2<M\). It follows that
\begin{equation}\label{EqVariationalLower}
 M\leq\sup_{z\in\C}\left\{2\psi\left(z\right)-Q\left(z\right)\right\}.
\end{equation}

By \eqref{EqLimitObstacleBound} and \eqref{EqStrictQuadratic},
\[
 2\psi\left(z\right)-Q\left(z\right)\leq M-B_{v,c_1}\left(z-z_*\right).
\]
The right-hand side has the unique maximum \(M\) at \(z_*\).
The left-hand side is upper semicontinuous, is finite somewhere, and
tends to \(-\infty\) at infinity; hence it attains its supremum.
Together with \eqref{EqVariationalLower}, this forces its maximizer to
be \(z_*\) and gives
\[
 \psi\left(L_{c_1}\left(u\right)\right)=V_{c_1}\left(L_{c_1}\left(u\right)\right).
\]
The map \(L_{c_1}\) is a real-linear homeomorphism, so these points are
dense in \(\operatorname{int}\mathscr{E}_{c_1}\).  If such points
\(z_j\) tend to \(z\) in the interior, upper semicontinuity gives
\[
 \psi\left(z\right)\geq\limsup_j\psi\left(z_j\right)
 =\lim_jV_{c_1}\left(z_j\right)=V_{c_1}\left(z\right).
\]
Together with \eqref{EqLimitObstacleBound}, this proves
\(\psi=V_{c_1}\) throughout the interior.

By \eqref{EqPolynomialLaplacian}, the zero measures converge
distributionally along this subsequence to the positive measure
\(\sigma=(2\pi)^{-1}\Delta\psi\).  This measure has mass at most one:
test the convergence against compactly supported smooth functions
between zero and one, and exhaust the plane.  In the ellipse interior,
\[
 \sigma=\frac1{2\pi}\Delta V_{c_1}
 =\frac{1}{\pi\left(1-\left|c_1\right|^2\right)}\dd A.
\]
The interior already has mass one under this density, so
\(\sigma=\mu_{c_1}\) on the whole plane.  Distributional convergence of
these probability measures implies vague convergence by approximation
of continuous test functions and because the limit has mass one, the
convergence is weak.  Every subsequence has a further subsequence with
this same limit, proving
\begin{equation}\label{EqNormalisedPlanarConclusion}
 \nu_{q_N}\overset{\mathrm{w}}{\longrightarrow}\mu_{c_1}
 \qquad\text{almost surely}.
\end{equation}
\end{proof}

\subsection{Gaussian sources and affine parameters}
\label{SectionGaussianSource}

\begin{proof}[Proof of Theorem~\ref{ThmPlanar}]
Let \(\mathbf{W}_{j,N}=\mathbf{Y}_N+\mathbf Z_N(s,\kappa_j)\),
\(j=0,1\).  Independence and addition of Gaussian covariances show that
each \(\mathbf{W}_{j,N}\) has law \(\mathbf{Z}_N(S,c_j)\), with the
parameters in \eqref{EqTotalCovariances}.  Moreover,
\(c_1-c_0=\kappa_1-\kappa_0\), so the heat operator in the theorem has the
sign used in \eqref{EqPlanarPQ}.

Put \(\widehat c_j=c_j/S\) and write \(z=b_N+\sqrt S\,w\).  The
matrices \(S^{-1/2}\mathbf{W}_{j,N}\) have elliptic Gaussian marginals
\(\mathbf{Z}_N(1,\widehat c_j)\), and \(\partial_z^2=S^{-1}\partial_w^2\).
Consequently,
\[
 q_N\left(b_N+\sqrt S\,w\right)
 =S^{N/2}
 \exp\!\left(-\frac{\widehat c_1-\widehat c_0}{2N}\partial_w^2\right)
 \det\!\left(w\mathbf{I}-S^{-1/2}\mathbf{W}_{0,N}\right).
\]
Proposition~\ref{PropNormalisedPlanar} applies to this actual sequence,
regardless of its coupling across \(N\).  It follows that
\[
 \nu_{q_N}\overset{\mathrm{w}}{\longrightarrow}
 \left(w\mapsto b+\sqrt S\,w\right)_*\mu_{1,c_1/S}
 =\left(z\mapsto b+z\right)_*\mu_{S,c_1}
 \qquad\text{almost surely}.
\]
Since \(b_N\to b\), tightness and uniform convergence of the affine
maps on compact sets justify the varying translation.

Apply Lemma~\ref{LemAlmostSureEllipticLaw} to
\(S^{-1/2}\mathbf{W}_{1,N}\), followed by the same scale and translation.
This proves the asserted target-matrix limit.  Intersecting the two
probability-one events gives the joint conclusion of the theorem.
\end{proof}

\section{Determinant estimates at the boundary}
\label{SectionDeterminantBound}

We bound the expected squared characteristic polynomial of a deformed
GUE matrix.  The matrix deformation identity will transfer this estimate
to the weighted interval norm used in Section~\ref{SectionBoundaryProof}.

Fix \(s>0\).  Throughout this section, let
\(\mathbf A_N=\mathbf A_N^*\) be deterministic, with
\[
 M\defeq\sup_N\left\|\mathbf A_N\right\|<\infty,
 \qquad
 L_{\mathbf A_N}\overset{\mathrm w}{\longrightarrow}\mu_0.
\]
Let \(\mathbf H_N\) be GUE of variance \(s\), with arbitrary coupling
across \(N\), and put
\begin{equation}\label{EqDeformedGUEObjects}
 \mathbf B_N\defeq\mathbf A_N+\mathbf H_N,
 \qquad
 \overline\mu_N\defeq\E\left[L_{\mathbf B_N}\right],
 \qquad
 \mu\defeq\mu_0\boxplus\mathfrak{sc}_s.
\end{equation}
Thus, in an orthonormal real Hilbert--Schmidt basis of Hermitian matrices,
\(\mathbf H_N\) has covariance \((s/N)\mathbf I\).  To compare truncated
logarithmic potentials, we use the Wasserstein distance
\[
 \mathbb W_1\left(\rho,\eta\right)
 \defeq\inf_{\pi\in\Pi\left(\rho,\eta\right)}
 \int_{\R^2}\left|x-y\right|\dd\pi\left(x,y\right),
\]
where \(\rho,\eta\) are probability measures on \(\R\) with finite
first moments and \(\Pi(\rho,\eta)\) denotes their couplings.

The following lemma recalls the classical deformed-GUE limit and
Biane's regularity results.  We record the elementary consequences
needed for the logarithmic cutoff in the determinant estimate.
\begin{lem}\label{LemDeformedGUE}
Almost surely,
\[
 L_{\mathbf B_N}\overset{\mathrm w}{\longrightarrow}\mu.
\]
The deterministic mean measures satisfy
\[
 \mathbb W_1\left(\overline\mu_N,\mu\right)\longrightarrow0.
\]
The measure \(\mu\) is compactly supported and has a density bounded by
\(C_s=1/(\pi\sqrt{s})\).  Its logarithmic potential is continuous on
\(\C\).
\end{lem}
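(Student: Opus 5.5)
The lemma bundles four assertions, and I will establish them in the order: almost-sure limit, Wasserstein convergence of the means, density bound, continuity of the potential.

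\textbf{Almost-sure limit.} For the almost-sure weak convergence \(L_{\mathbf B_N}\to\mu\) I will invoke the classical deformed-GUE law. Pastur's theorem, in the form used in \cite{CapitaineDonatiMartinFeralFevrier}, gives convergence in expectation of the Stieltjes transforms \(G_N(z)=\tr_N(z-\mathbf B_N)^{-1}\) to \(G_\mu\). This rests on the subordination relation \(G_\mu(z)=G_{\mu_0}(z-sG_\mu(z))\), which is exactly \(R_\mu=R_{\mu_0}+sw\).

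To make the conclusion valid for an arbitrary coupling across \(N\), I will use Gaussian concentration. For fixed \(z\in\C\setminus\R\), the map \(\mathbf H\mapsto\tr_N(z-\mathbf A_N-\mathbf H)^{-1}\) is Lipschitz in the Hilbert--Schmidt norm with constant \(N^{-1/2}|\Im z|^{-2}\). Since \(\mathbf H_N\) has covariance \((s/N)\mathbf I\), the deviations have probability at most \(2\mathrm e^{-cN^2}\), which is summable. Borel--Cantelli over a countable dense set of \(z\) then gives almost-sure convergence of \(G_N\) to \(G_\mu\), hence weak convergence. This uses only the marginals at each \(N\), exactly as in Lemma~\ref{LemAlmostSureEllipticLaw}.

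\textbf{Wasserstein convergence of the means.} Weak convergence \(\overline\mu_N\to\mu\) follows from the convergence in expectation above. To upgrade it to \(\mathbb W_1\), it suffices to check uniform integrability of \(|x|\), and I will get this from a second-moment bound:
\[
 \int x^2\,\dd\overline\mu_N=\E\tr_N\mathbf B_N^2\le 2M^2+2s.
\]
Weak convergence together with bounded second moments yields convergence of first absolute moments, and hence \(\mathbb W_1(\overline\mu_N,\mu)\to0\) on \(\R\).

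\textbf{Compact support and density bound.} Compact support is immediate: \(\supp\mu\subset[-M-2\sqrt s,\,M+2\sqrt s]\), either because the \(R\)-transforms add and both summands have compact support, or because \(\|\mathbf B_N\|\le M+\|\mathbf H_N\|\to M+2\sqrt s\) almost surely.

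The density bound comes from Biane's regularity for free convolution with a semicircle. Write \(\omega(z)=z-sG_\mu(z)\), so that \(G_\mu=G_{\mu_0}\circ\omega\). Taking imaginary parts,
\[
 \Im G_\mu(z)=\Im G_{\mu_0}(\omega)=-\Im\omega\int\frac{\dd\mu_0(x)}{|\omega-x|^2}.
\]
Also \(\Im\omega=\Im z-s\Im G_\mu(z)\), so \(\Im\omega\ge -s\Im G_\mu\) on the upper half-plane. By Cauchy--Schwarz,
\[
 |\Im G_{\mu_0}(\omega)|^2\le\Im\omega\cdot\Bigl(-\Im G_{\mu_0}(\omega)\Bigr)\cdot\int|\omega-x|^{-2}\dd\mu_0(x).
\]
Combining these yields \(s|G_\mu|^2\le\)\,\(\ldots\); the standard conclusion is \(|G_\mu(z)|\le s^{-1/2}\) on the upper half-plane, which I would cite from Biane's work. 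Stieltjes inversion then bounds the density by \(\pi^{-1}\sup|\Im G_\mu|\le1/(\pi\sqrt s)=C_s\).

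\textbf{Continuity of the potential.} Since \(\mu\) has a bounded density and compact support, \(w\mapsto\int_{\R}\log|w-x|\,\dd\mu(x)\) is continuous on \(\C\). To see this, split \(\log|w-x|\) into its truncation at level \(-T\), which is continuous in \(w\) uniformly, and a remainder. The remainder is bounded by \(C_s\int_{|y|<\mathrm e^{-T}}|\log|y||\,\dd y\to0\) uniformly in \(w\), so the potential is a uniform limit of continuous functions.

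\textbf{Main obstacle.} The delicate point is the density bound \(\|\dd\mu/\dd x\|_\infty\le1/(\pi\sqrt s)\). I would try to quote Biane's result rather than reprove it. Failing that, I would use that \(\omega\) maps \(\C^+\) into the region \(\{\int|\omega-x|^{-2}\dd\mu_0\le 1/s\}\) and deduce \(|G_\mu|^2=|G_{\mu_0}(\omega)|^2\le\int|\omega-x|^{-2}\dd\mu_0\le 1/s\) via Cauchy--Schwarz.
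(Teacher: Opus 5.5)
Your proposal is correct and follows essentially the paper's route: the classical deformed-GUE law for the limits, a uniform second-moment bound to upgrade weak convergence to $\mathbb W_1$, Biane's subordination for the density bound $C_s$, and the $\max\{\log|\cdot|,-T\}$ truncation for continuity of $U_\mu$. The differences are minor: you make the arbitrary coupling explicit via Lipschitz concentration of $\tr_N(z-\mathbf A_N-\mathbf H)^{-1}$ plus Borel--Cantelli, where the paper instead notes that the cited almost-sure moment convergence needs no independence across $N$; and your fallback actually reproves Biane's bound, correctly, since $\operatorname{Im}z=\operatorname{Im}\omega\,\bigl(1-s\int|\omega-x|^{-2}\dd\mu_0(x)\bigr)$ with $\operatorname{Im}\omega\geq\operatorname{Im}z>0$ (your first Cauchy--Schwarz display is vacuous and can be dropped).
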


\begin{proof}
The almost-sure and mean weak convergence are the classical deformed-GUE
law; see \cite[Introduction and Section~2]{CapitaineDonatiMartinFeralFevrier}.
Equivalently, they follow from
\cite[Chapter~4, Section~4.2, Theorem~4]{MingoSpeicher}:
the norm bound and weak convergence of \(L_{\mathbf A_N}\) imply the
required convergence of all moments.  This theorem does not require
independence across dimensions.
The measures \(\mu_0\) and \(\mathfrak{sc}_s\) are compactly supported,
and so is \(\mu\).

The identity
\[
 \int_\R x^2\dd\overline\mu_N\left(x\right)
 =\tr_N\left(\mathbf A_N^2\right)+s\leq M^2+s
\]
gives uniform integrability of the first moments.  Together with the
mean weak convergence, it proves
\(\mathbb W_1(\overline\mu_N,\mu)\to0\).
Biane's subordination and density formulas
\cite[Section~3, Lemma~3, Proposition~2 and Corollaries~1--3]{Biane}
give absolute continuity and the bound \(C_s\).
For \(T>0\), define the continuous truncated potential
\[
 U_{\mu,T}\left(z\right)\defeq
 \int_\R\max\left\{\log\left|z-y\right|,-T\right\}\dd\mu\left(y\right).
\]
The density bound gives, uniformly in \(z\in\C\),
\begin{equation}\label{EqLogTruncationError}
 0\leq U_{\mu,T}\left(z\right)-U_\mu\left(z\right)
 \leq C_s\int_{-\mathrm{e}^{-T}}^{\mathrm{e}^{-T}}
 \log\left(\frac{\mathrm{e}^{-T}}{\left|t\right|}\right)\dd t
 =2C_s\mathrm{e}^{-T}.
\end{equation}
Here \(|z-y|\geq|\Re z-y|\) gives the same estimate for nonreal
\(z\).  Thus \(U_\mu\) is a uniform limit of continuous functions.
\end{proof}

The next estimate is uniform on the interval used for the polynomial
norm.  We use the regularized-logarithm upper-bound argument of
\cite[Section~2.2]{BenArousBourgadeMcKenna}, with a fixed cutoff followed
by a second limit, so only qualitative Wasserstein convergence is needed.
\begin{lem}\label{LemDeterminantBound}
For every compact interval \(K\subset\R\),
\begin{equation}\label{EqDeterminantBound}
 \limsup_{N\to\infty}\sup_{x\in K}
 \left\{
 \frac1{2N}\log\E\left[
 \left|\det\left(x\mathbf I-\mathbf B_N\right)\right|^2\right]
 -U_\mu\left(x\right)\right\}\leq0.
\end{equation}
\end{lem}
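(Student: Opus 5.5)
We follow the regularized-logarithm scheme sketched in the introduction: truncate the logarithm at level \(-T\), control the exponential moment of the truncated linear statistic by Gaussian concentration, compare its mean with the limit via Wasserstein convergence, and remove the truncation using Lemma~\ref{LemDeformedGUE}.

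\textbf{Truncation.} Fix \(T>0\) and \(x\in K\). Put \(g_{x,T}(y)=\max\{\log|x-y|,-T\}\). Since \(\log|x-\lambda|\leq g_{x,T}(\lambda)\) pointwise,
\[
 \left|\det\left(x\mathbf I-\mathbf B_N\right)\right|^2
 \leq\exp\!\left(2\sum_{j}g_{x,T}\left(\lambda_j\left(\mathbf B_N\right)\right)\right)
 =\exp\!\left(2N\int g_{x,T}\,\dd L_{\mathbf B_N}\right).
\]
So it suffices to bound \(\E[\exp(2F_{x,T}(\mathbf H_N))]\), where \(F_{x,T}(\mathbf H)=\operatorname{Tr}g_{x,T}(\mathbf A_N+\mathbf H)\).

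\textbf{Concentration.} The function \(g_{x,T}\) is Lipschitz on \(\R\) with constant \(\mathrm e^{T}\), since its derivative is \(1/|x-y|\leq\mathrm e^T\) where it is not constant. By the Hoffman--Wielandt inequality, \(\mathbf H\mapsto\operatorname{Tr}g(\mathbf A_N+\mathbf H)\) is Lipschitz in Hilbert--Schmidt norm with constant \(\sqrt N\,\mathrm e^T\). The matrix \(\mathbf H_N\) is a standard Gaussian vector times \(\sqrt{s/N}\) in a real orthonormal HS basis. Gaussian log-Sobolev (Herbst) therefore gives
\[
 \log\E\,\mathrm e^{\lambda(F-\E F)}\leq\frac{\lambda^2}{2}\cdot\frac sN\cdot N\mathrm e^{2T}=\frac{\lambda^2 s\mathrm e^{2T}}{2}.
\]
Taking \(\lambda=2\) yields
\[
 \frac1{2N}\log\E\left|\det\left(x\mathbf I-\mathbf B_N\right)\right|^2\leq\int g_{x,T}\,\dd\overline\mu_N+\frac{s\mathrm e^{2T}}{N}.
\]

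\textbf{Mean comparison.} By Kantorovich duality, \(\int g_{x,T}\,\dd\overline\mu_N-\int g_{x,T}\,\dd\mu\leq\mathrm e^T\mathbb W_1(\overline\mu_N,\mu)\). By \eqref{EqLogTruncationError}, \(\int g_{x,T}\,\dd\mu=U_{\mu,T}(x)\leq U_\mu(x)+2C_s\mathrm e^{-T}\). All three bounds are uniform in \(x\in K\), in fact in \(x\in\R\), so
\[
 \sup_{x\in K}\left\{\frac1{2N}\log\E\left|\det\left(x\mathbf I-\mathbf B_N\right)\right|^2-U_\mu(x)\right\}\leq\mathrm e^T\mathbb W_1\left(\overline\mu_N,\mu\right)+2C_s\mathrm e^{-T}+\frac{s\mathrm e^{2T}}N.
\]
Let \(N\to\infty\) using Lemma~\ref{LemDeformedGUE}, then \(T\to\infty\).

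\textbf{Main obstacle.} The only delicate point is obtaining the exponential-moment bound with the correct constant while keeping the estimate uniform in \(x\) and independent of any coupling across \(N\). Herbst's inequality for Lipschitz functions of Gaussian vectors handles both, since only the marginal law at fixed \(N\) enters. One detail needs checking: the GUE normalisation must give variance exactly \(s/N\) per real HS coordinate; otherwise only a harmless constant changes.
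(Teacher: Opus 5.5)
Your proposal is correct and follows essentially the same route as the paper: you truncate the logarithm at $-T$, apply Hoffman--Wielandt together with Gaussian concentration (Lipschitz constant $\sqrt N\mathrm e^T$, variance $s/N$, exponential parameter $2$), compare means by Kantorovich--Rubinstein, and use the truncation error \eqref{EqLogTruncationError}, which gives exactly the paper's uniform bound $\mathrm e^T\mathbb W_1(\overline\mu_N,\mu)+2C_s\mathrm e^{-T}+s\mathrm e^{2T}/N$ before letting $N\to\infty$ and then $T\to\infty$. Your normalisation check also comes out right: a GUE matrix of variance $s$ has covariance $(s/N)\mathbf I$ in a real orthonormal Hilbert--Schmidt basis of Hermitian matrices, so no constant changes.
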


\begin{proof}
Fix \(T>0\), and put
\[
 f_{x,T}\left(y\right)\defeq\max\left\{\log\left|x-y\right|,-T\right\},
 \qquad
 F_{x,T}\left(\mathbf H\right)\defeq
 \operatorname{Tr}f_{x,T}\left(\mathbf A_N+\mathbf H\right).
\]
The function \(f_{x,T}\) is \(\mathrm{e}^T\)-Lipschitz, uniformly in
\(x\), and Hoffman--Wielandt \cite[Chapter~VI]{Bhatia} makes
\(F_{x,T}\) \(\sqrt N\mathrm{e}^T\)-Lipschitz in the
Hilbert--Schmidt norm.  Since \(F_{x,T}(\mathbf H_N)\) bounds
\(\log|\det(x\mathbf I-\mathbf B_N)|\), Gaussian concentration
\cite[Theorem~5.1 and equation~(5.8)]{Ledoux}, with covariance \(s/N\) and
exponential parameter \(2\), therefore gives
\begin{equation}\label{EqTruncatedDeterminant}
 \frac1{2N}\log\E\left[
 \left|\det\left(x\mathbf I-\mathbf B_N\right)\right|^2\right]
 \leq\int_\R f_{x,T}\left(y\right)\dd\overline\mu_N\left(y\right)
 +\frac{s\mathrm{e}^{2T}}N.
\end{equation}
The Kantorovich--Rubinstein inequality applies after subtracting
\(f_{x,T}(0)\), since the measures have finite first moments.
Combining it with \eqref{EqLogTruncationError} gives
\[
 \frac1{2N}\log\E\left[
 \left|\det\left(x\mathbf I-\mathbf B_N\right)\right|^2\right]
 -U_\mu\left(x\right)
 \leq\mathrm{e}^T\mathbb W_1\left(\overline\mu_N,\mu\right)
       +2C_s\mathrm{e}^{-T}+\frac{s\mathrm{e}^{2T}}N.
\]
The right-hand side is independent of \(x\).  First let \(N\to\infty\),
using Lemma~\ref{LemDeformedGUE}, and then let \(T\to\infty\).
\end{proof}

\section{Proof of the boundary theorem}\label{SectionBoundaryProof}

We use weighted asymptotic extremality to convert a subexponential
interval norm into convergence of zero measures.  The determinant
estimate in Section~\ref{SectionDeterminantBound} supplies this norm
bound for the heat-evolved characteristic polynomial.

The following criterion combines the weighted Bernstein--Markov
property of \cite[Theorem~3.2]{BloomWeighted} with
\cite[Theorem~2.3(b)]{MhaskarSaff}.  We give the weight and equilibrium
measure identification needed to apply these results.
\begin{lem}\label{LemIntervalNorm}
Let \(K=[m-L,m+L]\), where \(L>0\), and let
\[
 \dd\alpha_K\left(x\right)
 \defeq\frac{\mathbf1_K\left(x\right)}
 {\pi\sqrt{L^2-\left(x-m\right)^2}}\dd x.
\]
Let \(\mu\) be a probability measure supported on \(K\), with
\(U_\mu\) continuous on \(\C\).  If \(P_N\) is monic of degree \(N\)
and
\begin{equation}\label{EqIntervalNormAssumption}
 I_N\defeq\int_K\left|P_N\left(x\right)\right|^2
 \mathrm{e}^{-2NU_\mu\left(x\right)}\dd\alpha_K\left(x\right),
 \qquad
 \limsup_{N\to\infty}\frac1N\log I_N\leq0,
\end{equation}
then
\[
 \nu_{P_N}\overset{\mathrm w}{\longrightarrow}\mu.
\]
\end{lem}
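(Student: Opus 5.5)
The plan is to recast the hypothesis as a statement about weighted polynomials on \(K\) with weight \(w=\mathrm{e}^{-U_\mu}\), and then to apply weighted potential theory. Since \(U_\mu\) is continuous, \(w\) is a continuous positive weight on the regular compact set \(K\), so it is admissible. The first step is to identify the weighted equilibrium measure of \((K,w)\). The external field is \(Q=U_\mu\), so the weighted energy functional is
\[
 I_w(\sigma)=-\iint\log|x-y|\,\dd\sigma\,\dd\sigma+2\int U_\mu\,\dd\sigma .
\]
Completing the square, we can write this as the energy of \(\sigma-\mu\), namely \(\iint\log|x-y|^{-1}\dd(\sigma-\mu)\dd(\sigma-\mu)\), minus the constant \(I(\mu)\). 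Here \(I(\mu)\) is finite because \(U_\mu\) is bounded on \(K\). The mutual energy of a signed measure of total mass zero with compact support is nonnegative and vanishes only for the zero measure. Hence the unique minimizer is \(\sigma=\mu\). Equivalently, the Frostman conditions hold with constant \(F_w=0\): \(U_\mu-Q\) equals \(0\) on \(\supp\mu\) and on all of \(K\). Consequently the weighted Chebyshev constant is \(\mathrm{e}^{-F_w}=1\), and \(\|P_Nw^N\|_K\geq 1\) for every monic \(P_N\) of degree \(N\), with the correct exponential order.

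The second step converts the \(\mathrm L^2(\alpha_K)\) hypothesis into a sup-norm statement. The arcsine measure is the unweighted equilibrium measure of the interval. It satisfies a weighted Bernstein--Markov inequality for continuous weights, by \cite[Theorem~3.2]{BloomWeighted}: for every \(\varepsilon>0\),
\[
 \|P_Nw^N\|_K\leq C_\varepsilon\mathrm{e}^{\varepsilon N}\|P_Nw^N\|_{\mathrm L^2(\alpha_K)} .
\]
Combined with \eqref{EqIntervalNormAssumption}, this gives \(\limsup_N\|P_Nw^N\|_K^{1/N}\leq1\). Together with the lower bound from the first step, the sequence is asymptotically extremal: \(\|P_Nw^N\|_K^{1/N}\to\mathrm{e}^{-F_w}\).

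The third step is to invoke \cite[Theorem~2.3(b)]{MhaskarSaff}. That result says that the zero measures of monic asymptotically weighted-extremal polynomials converge weakly to the weighted equilibrium measure, provided the support of the equilibrium measure has empty interior and its complement is connected. Both conditions hold here, since \(\supp\mu\subset K\subset\R\). This is exactly why complex zeros are allowed. As a backup, I would prove this directly. Take a subsequence with \(\nu_{P_N}\) converging vaguely to some \(\nu\), of mass at most one; mass cannot escape to infinity without violating the sup bound. By the lower envelope theorem, \(U_\nu\leq U_\mu\) quasi-everywhere on \(K\), with mass deficits handled as in \cite{MhaskarSaff}. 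Off \(\R\), the functions \(N^{-1}\log|P_N|\) converge to \(U_\nu\) by Hartogs-type arguments. The Bernstein--Walsh inequality gives \(U_\nu\leq U_\mu\) on \(\C\). Then \(U_\mu-U_\nu\) is harmonic off \(K\) and bounded near infinity. By the domination principle and the fact that \(\C\setminus K\) is connected with \(K\) thin in the plane, this forces \(\nu=\mu\).

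I expect the main obstacle to be this final identification. The points to check are that the weighted Bernstein--Markov property is applied correctly, which needs continuity of \(w\) on \(K\), and that the Mhaskar--Saff hypotheses match our setting, where zeros may lie anywhere in \(\C\). The equilibrium-measure identification in the first step is routine once \(U_\mu\) is known to be continuous.
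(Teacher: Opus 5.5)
Your proposal is correct and follows essentially the same route as the paper. You identify \(\mu\) as the weighted equilibrium measure of \(K\) for \(w=\mathrm{e}^{-U_\mu}\) with \(F_w=0\), use Bloom's weighted Bernstein--Markov theorem to get \(\limsup_N\|w^NP_N\|_K^{1/N}\leq1\), and conclude with Mhaskar--Saff's Theorem~2.3(b), whose interior condition is vacuous because \(\supp\mu\subset\R\). The differences are minor: you prove the equilibrium identification directly by completing the square in the weighted energy instead of citing the Frostman characterization, and you take the lower bound \(\|w^NP_N\|\geq1\) from the weighted Chebyshev constant rather than from the weighted Bernstein--Walsh inequality on \(\supp\mu\) as the paper does (this bound is automatic for monic polynomials anyway, and your upper bound on \(K\) passes to \(\supp\mu\subset K\)).
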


\begin{proof}
Write \(\|\cdot\|_E\) for the supremum norm on a compact set \(E\),
and set \(w=\mathrm{e}^{-U_\mu}\) on \(K\).  This weight is continuous
and bounded away from zero.  Expansion in the orthonormal Chebyshev
basis of the arcsine measure, followed by Cauchy--Schwarz, gives
\[
 \left\|P\right\|_K
 \leq\sqrt{2N+1}\left\|P\right\|_{\mathrm L^2\left(\alpha_K\right)},
 \qquad \deg P\leq N.
\]
Thus \((K,\alpha_K)\) has the Bernstein--Markov property.
Since the interval \(K\) is locally regular,
\cite[Theorem~3.2]{BloomWeighted} gives its weighted version: for every
\(\varepsilon>0\), there is \(C_\varepsilon<\infty\) such that
\[
 \left\|w^NP_N\right\|_K
 \leq C_\varepsilon\left(1+\varepsilon\right)^N I_N^{1/2}.
\]
Consequently \(\limsup_N\|w^NP_N\|_K^{1/N}\leq1\).

In the negative-logarithm convention,
the external field is \(Q=-\log w=U_\mu\).  The measure \(\mu\)
has finite logarithmic energy by continuity of its potential, and
\(-U_\mu+Q=0\) throughout \(K\).  The equilibrium characterization
\cite[Section~6, equation~(6.2)]{SaffPotentialSurvey} therefore identifies
\(\mu\) as the weighted equilibrium measure, with modified Robin
constant \(F=0\).
Put \(S=\operatorname{supp}\mu\) and \(C_N=\|w^NP_N\|_S\).
Since \(S\subset\R\), its outer boundary is \(S\) itself, so the
balayage of \(\mu\) onto this boundary is \(\mu\).
The weighted Bernstein--Walsh estimate
\cite[Proposition~4.2]{MhaskarSaff}, with \(F=0\), therefore gives
\[
 \left|P_N\left(z\right)\right|
 \leq C_N\mathrm{e}^{NU_\mu\left(z\right)}
 \qquad\left(z\in\C\right).
\]
Divide by \(|z|^N\) and let \(|z|\to\infty\). Monicity and
\(U_\mu(z)-\log|z|\to0\) give \(C_N\geq1\).
Together with the preceding upper bound, this gives
\[
 1\leq\liminf_{N\to\infty}
 \left\|w^NP_N\right\|_S^{1/N}
 \leq\limsup_{N\to\infty}
 \left\|w^NP_N\right\|_S^{1/N}\leq1.
\]
The polynomial hull of the compact real set \(S\) is \(S\) itself
and has empty planar interior.  Hence the interior-zero condition of
\cite[Theorem~2.3(b)]{MhaskarSaff} is vacuous, and that theorem gives
\(\nu_{P_N}\overset{\mathrm w}{\longrightarrow}\mu\).
Its forward implication allows complex coefficients and does not
require any prior bound on the zeros.
\end{proof}

We now apply the above interval criterion to our setting.
The matrix second-moment identity supplies its weighted norm bound,
without any independence assumption across dimensions.
\begin{prop}\label{PropHermitianEndpoint}
Let \(s>0\), \(|c_0|\leq s\), and let
\(\mathbf A_N=\mathbf A_N^*\) be deterministic, with
\[
 \sup_N\left\|\mathbf A_N\right\|<\infty,
 \qquad
 L_{\mathbf A_N}\overset{\mathrm w}{\longrightarrow}\mu_0.
\]
The Gaussian matrices \(\mathbf Z_N(s,c_0)\) may be coupled arbitrarily
across \(N\).  Define
\[
 p_N\left(z\right)\defeq
 \det\left(z\mathbf I-\mathbf A_N-\mathbf Z_N\left(s,c_0\right)\right),
 \qquad
 q_N\defeq\exp\left(-\frac{s-c_0}{2N}\partial_z^2\right)p_N.
\]
Then, almost surely,
\[
 \nu_{q_N}\overset{\mathrm w}{\longrightarrow}
 \mu_0\boxplus\mathfrak{sc}_s.
\]
\end{prop}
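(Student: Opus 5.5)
We combine the matrix second-moment identity with the uniform determinant bound, and then conclude with the interval criterion. Put \(\mu=\mu_0\boxplus\mathfrak{sc}_s\). By Lemma~\ref{LemDeformedGUE}, \(\mu\) is compactly supported with continuous potential. Choose a compact interval \(K=[m-L,m+L]\) containing \(\supp\mu\); for instance \(m=0\) and \(L=\sup_N\|\mathbf A_N\|+2\sqrt s+1\) works, since \(\supp\mu\subset[-\|\mu_0\text{-support}\|-2\sqrt s,\ \cdot\,]\). Apply Proposition~\ref{PropMatrixDeformation} with \(S=s\), the deterministic matrix \(\mathbf A=\mathbf A_N\), and parameters \(c_0\) and \(c_1=s\). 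This is admissible because \(|c_0|\le s\) and \(|c_1|=s\). Since \(\mathbf Z_N(s,s)\) is GUE of variance \(s\), we have \(p_1(z)=\det(z\mathbf I-\mathbf B_N)\) with \(\mathbf B_N=\mathbf A_N+\mathbf H_N\), and for every \(x\in\R\)
\[
 \E\left[\left|q_N\left(x\right)\right|^2\right]=\E\left[\left|\det\left(x\mathbf I-\mathbf B_N\right)\right|^2\right].
\]
Only the law of each \(\mathbf Z_N(s,c_0)\) enters here, so the coupling across \(N\) is irrelevant.

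Next, Lemma~\ref{LemDeterminantBound} gives a deterministic \(\delta_N\to0\) such that \(\E[|q_N(x)|^2]\le \mathrm e^{2N(U_\mu(x)+\delta_N)}\) uniformly for \(x\in K\). Integrate against the probability measure \(\alpha_K\) using Tonelli. With \(I_N\) defined as in \eqref{EqIntervalNormAssumption} for \(P_N=q_N\), this yields \(\E[I_N]\le \mathrm e^{2N\delta_N}\). Markov's inequality with threshold \(\mathrm e^{2N\delta_N+\sqrt N}\) gives
\[
 \Prob\left[I_N>\mathrm e^{2N\delta_N+\sqrt N}\right]\le \mathrm e^{-\sqrt N},
\]
which is summable. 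The first Borel--Cantelli lemma therefore gives, almost surely, \(\frac1N\log I_N\le 2\delta_N+N^{-1/2}\) for all large \(N\), and hence \(\limsup_N\frac1N\log I_N\le0\).

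The heat operator preserves the leading coefficient, so \(q_N\) is monic of degree \(N\). Lemma~\ref{LemIntervalNorm} therefore applies pathwise on this probability-one event and gives \(\nu_{q_N}\overset{\mathrm w}{\longrightarrow}\mu\).

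The main obstacle is the uniformity in \(x\) of the determinant bound, which is needed to integrate against \(\alpha_K\). This is exactly what Lemma~\ref{LemDeterminantBound} provides, so what remains is to check that \(K\supset\supp\mu\) and that \(U_\mu\) is continuous, as Lemma~\ref{LemIntervalNorm} requires. One should also verify that the heat sign matches: \(c_1-c_0=s-c_0\).
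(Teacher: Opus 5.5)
Your proposal is correct and follows the paper's proof essentially step for step. Both use the Hall--Ho identity with \(c_1=s\), then Tonelli together with the uniform bound of Lemma~\ref{LemDeterminantBound} to get \(\E[I_N]\leq\mathrm e^{o(N)}\), then Markov--Borel--Cantelli, then Lemma~\ref{LemIntervalNorm}. The only cosmetic difference is your Borel--Cantelli bookkeeping: you use a single deterministic \(\delta_N\to0\) with threshold \(\mathrm e^{2N\delta_N+\sqrt N}\), while the paper uses thresholds \(\mathrm e^{4\varepsilon N}\) and intersects over rational \(\varepsilon\); both work.
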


\begin{proof}
Put \(\mu=\mu_0\boxplus\mathfrak{sc}_s\), and choose a compact interval
\(K\) containing its support.  Define \(I_N\) as in
\eqref{EqIntervalNormAssumption}, with \(P_N=q_N\).
Proposition~\ref{PropMatrixDeformation}, with \(c_1=s\), gives
\[
 \E\left[\left|q_N\left(x\right)\right|^2\right]
 =\E\left[\left|\det\left(x\mathbf I-\mathbf A_N-\mathbf H_N\right)\right|^2\right],
\]
where \(\mathbf H_N\) has the GUE law of variance \(s\).
Tonelli's theorem and Lemma~\ref{LemDeterminantBound} imply, for every
\(\varepsilon>0\) and all sufficiently large \(N\),
\[
 \E\left[I_N\right]\leq\mathrm{e}^{2\varepsilon N},
 \qquad
 \Prob\left[I_N>\mathrm{e}^{4\varepsilon N}\right]
 \leq\mathrm{e}^{-2\varepsilon N}.
\]
The first Borel--Cantelli lemma, with a countable intersection over
positive rational \(\varepsilon\), proves
\(\limsup N^{-1}\log I_N\leq0\) almost surely.
The heat operator preserves monicity, and Lemma~\ref{LemDeformedGUE}
gives continuity of \(U_\mu\).  Lemma~\ref{LemIntervalNorm} therefore
proves the proposition.
\end{proof}

Rotation reduces the general boundary parameter to the real endpoint,
and translation transports both limiting measures to the line in the
statement of Theorem~\ref{ThmBoundary}.
\begin{proof}[Proof of Theorem~\ref{ThmBoundary}]
Set
\[
 \widetilde{\mathbf Z}_N\defeq u^{-1}\mathbf Z_N\left(s,c_0\right),
 \qquad
 \widetilde c_0\defeq u^{-2}c_0,
 \qquad
 \widetilde p_N\left(\zeta\right)\defeq
 \det\left(\zeta\mathbf I-\mathbf A_N-\widetilde{\mathbf Z}_N\right).
\]
Since \(|u|=1\), the matrix \(\widetilde{\mathbf Z}_N\) has law
\(\mathbf Z_N(s,\widetilde c_0)\), with \(|\widetilde c_0|\leq s\).
The terminating heat series gives
\begin{equation}\label{EqBoundaryHeatChange}
 q_N\left(b_N+u\zeta\right)
 =u^N\exp\left(-\frac{s-\widetilde c_0}{2N}\partial_\zeta^2\right)
       \widetilde p_N\left(\zeta\right).
\end{equation}
Proposition~\ref{PropHermitianEndpoint} proves convergence of the zero
measures on the right to \(\mu_0\boxplus\mathfrak{sc}_s\).
Their pushforwards under \(\zeta\mapsto b_N+u\zeta\) are
\(\nu_{q_N}\).  Since \(b_N\to b\), tightness and uniform convergence
of these maps on compact sets give the first convergence in
\eqref{EqBoundaryLimit}.

For the target matrix, \(u^{-1}\mathbf Z_N(s,su^2)\) has the GUE
law of variance \(s\).  The classical convergence recalled in
Lemma~\ref{LemDeformedGUE} and the same affine pushforward give the
second convergence.  Intersecting the two probability-one events
requires no independence between the initial and target sequences.
\end{proof}

\clearpage
\section{Simulations}\label{SectionSimulations}

We illustrate Theorems~\ref{ThmPlanar} and~\ref{ThmBoundary} and
Corollary~\ref{CorGinibreSemicircle} through three examples. The figures
compare the zeros of the heat-evolved polynomial with the predicted
limiting measures and, where indicated, with the corresponding
target-matrix eigenvalues.

The Gaussian matrices were sampled using the realization in
Section~\ref{SectionGaussianRealization}, and the initial roots were
obtained by numerical diagonalization. For each pair \(c_0,c_1\),
we set
\[
 q_{N,t}\defeq\exp\left(-\frac{t\left(c_1-c_0\right)}{2N}
 \partial_z^2\right)p_N,\qquad 0\leq t\leq1.
\]
While the roots are simple, differentiation of
\(q_{N,t}(z_i(t))=0\) gives
\[
 \dot z_i\left(t\right)=\frac{c_1-c_0}{N}
 \sum_{j\ne i}\frac1{z_i\left(t\right)-z_j\left(t\right)}.
\]

Figure~\ref{FigPlanarSimulation} illustrates the planar theorem with
zero source, total variance one, \(c_0=0.75\), and \(c_1=-0.75\).
At each fixed time, the predicted law is uniform on
\(\mathscr E_{1,c(t)}\), where \(c(t)=0.75-1.5t\).
The change from a horizontal to a vertical ellipse passes through the
unit disk at \(t=1/2\). The final two panels compare the polynomial
zeros with the target-matrix eigenvalues.

\begin{figure}[!htbp]
\centering
\includegraphics[width=\linewidth]{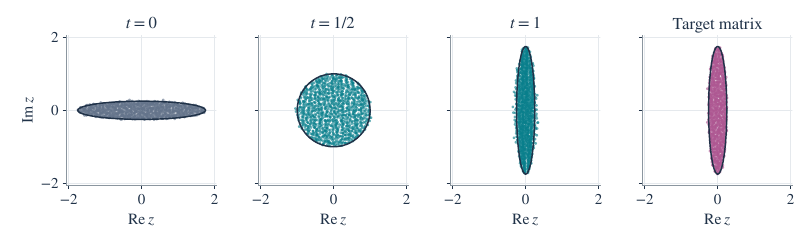}
\caption{Planar evolution with \(c_0=0.75\), \(c_1=-0.75\).
From left to right: roots at \(t=0,1/2,1\), and target-matrix
eigenvalues. Each cloud pools three independent realizations with
\(N=400\). Curves show the predicted limiting support boundaries.}
\label{FigPlanarSimulation}
\end{figure}

\clearpage
Figure~\ref{FigGinibreSimulation} takes \(p_N\) to be the characteristic
polynomial of a complex Ginibre matrix with entry variance \(1/N\),
and \(c_0=0,c_1=1\). For each fixed \(t<1\),
Theorem~\ref{ThmPlanar} predicts an ellipse with semiaxes \(1+t\)
and \(1-t\). At \(t=1\), Corollary~\ref{CorGinibreSemicircle} gives
the variance-one semicircle law. The small-\(N\) snapshots show
individual root velocities, while the larger-\(N\) panels illustrate
the endpoint distribution. The roots generally remain complex at
finite \(N\).

\begin{figure}[!htbp]
\centering
\includegraphics[width=\linewidth]{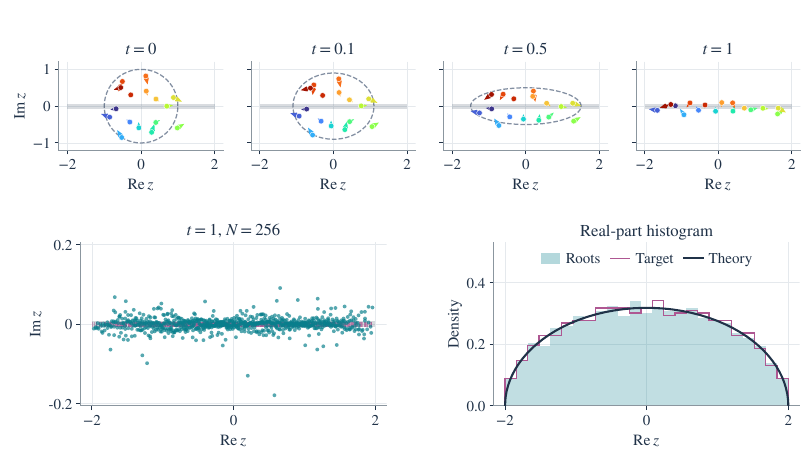}
\caption{Ginibre-to-semicircle evolution. Top: one realization with
\(N=16\) at \(t=0,0.1,0.5,1\); each arrow is \(0.3\dot z_i(t)\),
and colors identify the same root across snapshots. Dashed curves
show the fixed-time limiting ellipses; the gray segment is \([-2,2]\).
Bottom: endpoint roots and their real-part histogram, pooling three
realizations with \(N=256\), with target GUE eigenvalues and the
semicircle density for comparison.}
\label{FigGinibreSimulation}
\end{figure}

\clearpage
Finally, Figure~\ref{FigTwoPointSimulation} illustrates
Theorem~\ref{ThmBoundary} for a deterministic diagonal source with
equal numbers of entries \(-a\) and \(a\), initial Ginibre noise,
and \(c_0=0,c_1=1\). The limiting measure is
\[
 \mu_a\defeq\left(\frac12\delta_{-a}+\frac12\delta_a\right)
 \boxplus\mathfrak{sc}_1.
\]
The choices \(a=0.6,1,1.4\) give, respectively, a single support
interval, the critical case where the density vanishes at the origin,
and two separated support intervals \cite[Corollary~3]{Biane}.
These are separate parameter
choices at the same terminal time. 

\begin{figure}[!htbp]
\centering
\includegraphics[width=\linewidth]{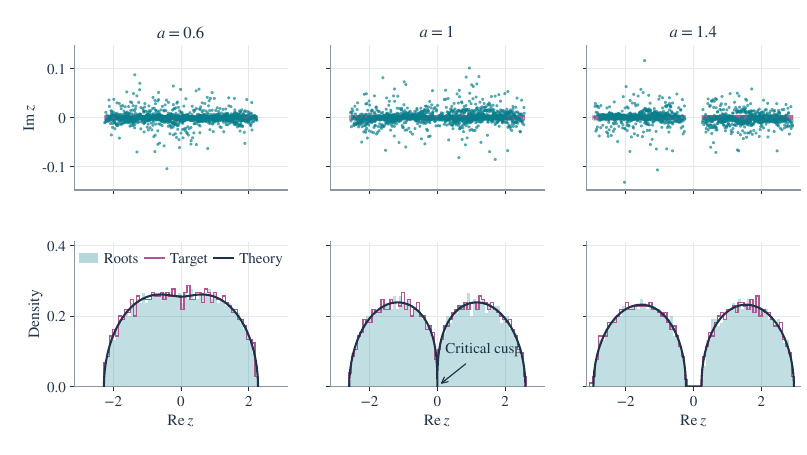}
\caption{Boundary limits with a symmetric two-point source. Columns
correspond to \(a=0.6,1,1.4\), all at \(t=1\). Each column pools three
realizations with \(N=400\). Top: actual complex roots, with expanded
imaginary scale, and target-matrix eigenvalues on the real axis.
Bottom: real-part histograms and the theoretical density of \(\mu_a\).}
\label{FigTwoPointSimulation}
\end{figure}
\clearpage

\begingroup
\raggedright
\bibliographystyle{acm}
\bibliography{HeatFlowConjRefRevision}
\endgroup

\bigskip

\noindent{\sc School of Mathematics, University of Edinburgh, James Clerk Maxwell Building, Peter Guthrie Tait Rd, Edinburgh EH9 3FD, U.K.}\newline
\href{mailto:theo.assiotis@ed.ac.uk}{\small theo.assiotis@ed.ac.uk}

\end{document}